\documentclass{article}
\usepackage[utf8]{inputenc}
\usepackage{amsmath}
\usepackage{amsthm}
\usepackage{amsfonts}
\usepackage{amssymb}
\usepackage{bm}
\usepackage{caption}
\usepackage{xcolor} 
\definecolor{Prune}{RGB}{99,0,60} 
\definecolor{B1}{RGB}{49,62,72} 
\definecolor{C1}{RGB}{124,135,143}
\definecolor{D1}{RGB}{213,218,223}
\definecolor{A2}{RGB}{198,11,70}
\definecolor{B2}{RGB}{237,20,91}
\definecolor{C2}{RGB}{238,52,35}
\definecolor{D2}{RGB}{243,115,32}
\definecolor{A3}{RGB}{124,42,144}
\definecolor{B3}{RGB}{125,106,175}
\definecolor{C3}{RGB}{198,103,29}
\definecolor{D3}{RGB}{254,188,24}
\definecolor{A4}{RGB}{0,78,125}
\definecolor{B4}{RGB}{14,135,201}
\definecolor{C4}{RGB}{0,148,181}
\definecolor{D4}{RGB}{70,195,210}
\definecolor{A5}{RGB}{0,128,122}
\definecolor{B5}{RGB}{64,183,105}
\definecolor{C5}{RGB}{140,198,62}
\definecolor{D5}{RGB}{213,223,61}
\usepackage{mdframed}
\usepackage{multirow} 
\usepackage{multicol} 
\usepackage{tikz}
\usetikzlibrary{cd}
\usepackage{graphicx}
\usepackage[absolute]{textpos} 
\usepackage{colortbl}
\usepackage{array}
\usepackage{geometry}
\usepackage{titlesec}
\usepackage{hyperref}
\hypersetup{ 
    colorlinks=true,
    citecolor=Prune,
    linkcolor=black,
    urlcolor=A5}
\usepackage{amssymb}   
\usepackage{stmaryrd}
\usepackage{amsmath}
\usepackage{amsmath,amscd}
\usepackage{dsfont}
\usepackage{enumitem}

\usepackage{wrapfig}			
\usepackage{standalone}
\usepackage{tikz,pgfplots}
\pgfplotsset{compat=1.15}
\usepackage{mathrsfs}
\usetikzlibrary{arrows}
\usetikzlibrary{decorations.markings}
\usepackage{adjustbox}
\usepackage{caption}
\usepackage{enumitem}
\usepackage{blindtext}
\usepackage{comment}
\usepackage{listings}
\usepackage{algorithm}
\usepackage{placeins}
\usepackage[italicComments=false]{algpseudocodex}

\usepackage{subcaption}
\usepackage{mathtools}
\newcommand{\defeq}{\vcentcolon=}

\usepackage[nottoc]{tocbibind}
\usepackage{tikz-cd}
\newcounter{thmletter}

\renewcommand{\thethmletter}{\arabic{thmletter}}

\newcounter{thmletterfr}

\renewcommand{\thethmletterfr}{\arabic{thmletterfr}}

\counterwithin{figure}{section}

 \newtheorem {definition} {Definition} [section] 
 \newtheorem {theorem}[definition] {Theorem}

  \newtheorem {corollary}[definition]  {Corollary}
    
\newtheorem {prop}[definition] {Proposition}
\newtheorem {lemma}[definition] {Lemma}
 
\newtheorem {remark}[definition] {Remark}

\newtheorem {example}[definition] {Example}

\makeatletter
\let\c@figure\c@definition
\let\c@algorithm\c@definition
\makeatother

\numberwithin{equation}{section}
\newcommand{\theoremcolorbox}[2]{\def\theoremframecommand{\fcolorbox{#1}{#2}}}

\newtheorem{hyp}{Hypothesis}
\theoremcolorbox{black}{cyan}
\newcommand{\encad}[1]{%
\fbox{\begin{minipage}[t]{0.92\linewidth}%
#1\end{minipage}}}

\theoremcolorbox{black}{cyan}

\usepackage{listings}

\lstdefinelanguage{Julia}%
  {morekeywords={abstract,break,case,catch,const,continue,do,else,elseif,%
      end,export,false,for,function,immutable,import,importall,if,in,%
      macro,module,mutable,quote,return,struct,true,try,type,typealias,%
      using,while},%
   sensitive=true,%
   morecomment=[l]{\#},%
   morecomment=[n]{\#=}{=\#},%
   morestring=[s]{"}{"},%
   morestring=[m]{'}{'},%
}[keywords,comments,strings]

\newcommand*{\EnsembleQuotient}[2]%
{\ensuremath{%
    #1/\!\raisebox{-.65ex}{\ensuremath{#2}}}}
    
\newcommand*{\RZ}{\EnsembleQuotient{\mathbb{R}}{\mathbb{Z}}}

\newcommand*{\fonction}[5]{
\begin{align*}
#1 : \left\{\begin{array}{lcl}  #2 &\to      &#3\\
                                #4    &\mapsto  &#5
\end{array} \right. .      
\end{align*} 
}

\newcommand{\N}{\mathbb{N}}

\title{Multi-type Galton-Watson processes in dynamical environments}
\author{Thomas Morand\thanks{Université Paris-Saclay, CNRS, Laboratoire de mathématiques d’Orsay, 91405, Orsay, France\\ Email: thomas.morand@universite-paris-saclay.fr}\\\textit{Paris Saclay University}}
\date{2026}

\begin{document}

\maketitle

\renewcommand{\proofname}{Proof}
\renewcommand\refname{References}
\renewcommand*\contentsname{Table of contents}
\newcommand{\argmin}{\text{argmin}}

\begin{abstract}

We define a model of multi-type Galton-Watson processes in dynamical environments where the environment evolves according to a dynamical system $(\mathbb{X},T)$. Three behaviours are possible: uniformly subcritical, critical, and uniformly supercritical. We provide a criterion to determine the regime of a multi-type Galton-Watson process in dynamical environments. We study the continuity of the probability of extinction $q$ in the uniformly supercritical case.
\end{abstract}

\section*{Introduction}

Galton-Watson processes are population evolution processes that have been well known since the 19th century. A generalisation of the classical model is the consideration of processes involving several types of objects. These different types of individuals have different reproduction laws and can produce offspring of different types. This model is called multi-type Galton-Watson processes. Bartlett~\cite{bartlett1946notes} introduced it in a special case involving two types of individuals. The general formulation and treatment were provided by Kolmogorov and Dmitriev~\cite{MR21264}. We refer notably to \cite{MR163361} for a detailed historical study of multi-type Galton-Watson processes.

In \cite{M1}, we have defined the Galton-Watson processes in dynamical environments. These are Galton-Watson processes in which the law of reproduction across generations evolves according to a dynamical system $(\mathbb{X},T)$.

In this article, we define the multi-type Galton-Watson processes in dynamical environments by extending the preceding model to the multi-type case. Let $d\in\mathbb{N}^*$ be the number of types which is fixed and finite in this article.
We consider the sub-multiplicative norm $\|\cdot\|$ defined by: for all $A=(a_{i,j})_{(i,j)\in\llbracket 1,d\rrbracket}\in\mathcal{M}_d(\mathbb{R})$, \begin{align*}
 \|A\|=\max_{i\in\llbracket 1,d\rrbracket} \sum_{j=1}^d |a_{i,j}|.   
\end{align*}
We denote by $\rho(A)$ the spectral radius of the matrix $A$.

We consider a discrete-time dynamical system defined on a compact space $(\mathbb{X},T)$ together with a continuous function $x\in\mathbb{X}\mapsto \bm{\mu}_x\in\mathcal{P}(\mathbb{N}^d)^d$, called the law of reproduction of the multi-type Galton-Watson process in dynamical environments. The law of reproduction of the multi-type Galton-Watson process in dynamical environments with the initial environment $x\in\mathbb{X}$ at generation $n\in\mathbb{N}$ is $\bm{\mu}_{T^nx}$, i.e., the law of reproduction between generations evolves under the action of $T$. 

The probability of extinction $\bm{q}=(q_1,\ldots,q_d)$ is a function defined on $\mathbb{X}$ such that for all $x\in\mathbb{X}$ and $i\in\llbracket 1,d\rrbracket$, $q_i(x)$ denotes the probability of extinction of the multi-type Galton-Watson process in dynamical environments with the initial environment $x$ and starting with an individual of type $i \in \llbracket1,d\rrbracket$. When $\bm{q}$ is equal to $\bm{1}$, there is almost certain extinction of the process, and we say that $x\in\mathbb{X}$ is a bad environment. We denote by \[
N \defeq \{x \in \mathbb{X} : \bm{q}(x) = \bm{1}\}
\] the set of bad environments. We can choose the initial environment $x$ according to a probability measure $\nu$. In this case, the probability of choosing an initial environment where extinction is almost certain is given by $\nu(N)$. Since $N$ is a $T$-invariant set, its measure with respect to any $T$-invariant and ergodic probability measure on $\mathbb{X}$ is either zero or one.

The results of Athreya and Karlin~\cite[Theorem 8]{MR0298780} and Kaplan~\cite[Theorem 2]{10.1214/aop/1176996659} adapted to this model provide a criterion for determining the measure of $N$ by an ergodic probability measure under some hypotheses. When $\nu$ is a $T$-ergodic probability measure on $\mathbb{X}$, then\begin{align}\label{equ1}
    \nu(N)=0 \text{ if and only if }\pi_\nu>0,
\end{align}
where:\begin{itemize}
    \item $M(x) = (m_{ij}(x))_{1 \leq i,j \leq d}$ is the mean matrix such that for all $i,j\in\llbracket 1,d\rrbracket$, $m_{i,j}$ is the expected number of offspring of type $j$ produced by an individual of type $i$,
    \item the Lyapunov exponent $\pi_\nu$ is defined by\begin{align*} 
        \pi_\nu \coloneqq \lim_{n \to +\infty} \frac{1}{n} \mathbb{E}_\nu[\ln \|M (T^{n-1}\cdot)\dots M(\cdot)\|].
    \end{align*}
\end{itemize}  

We propose a classification of multi-type Galton-Watson processes in dynamical environments (which coincides with the single-type case). The process is: \begin{itemize}
 \item \textbf{uniformly subcritical} if the set $N$ is of measure one according to all ergodic probability measures,
 \item \textbf{critical} if the set $N$ is of measure zero for some ergodic probability measure and of measure one for another,
 \item \textbf{uniformly supercritical} if the set $N$ is of measure zero according to all ergodic probability measures.
\end{itemize}

The main result of this article concerns the regularity of the probability of extinction. We show that the continuity of $x\in\mathbb{X}\mapsto\bm{\mu}_x$, under some hypotheses, is preserved by $x\in\mathbb{X}\mapsto \bm{q}(x)$ (Theorem~\ref{thmcontinuite}). This is a generalization of \cite[Theorem 1.3.5]{M1} to the multi-type case. By definition, the probability of extinction is lower semi-continuous. We show in the uniformly supercritical case that the probability of extinction is also upper semi-continuous. To do that, we assume that the number of offspring of type $j$ produced by an individual of type $i$ under the environment $x$ is bounded from below and above, uniformly in $i$, $j$ and $x$. We then obtain that the ratio between two coefficients of the matrix $M (T^{n-1}x)\dots M(x)$ is bounded by a constant which does not depend on the choice of the coefficients, on $x$ and on $n$. By \cite[Corollary~1.11]{MR1734626} (which is a variation of the semi-uniform Birkhoff ergodic theorem for a sub-additive sequence of continuous functions with a strict inequality condition), we can specify a decreasing sequence of continuous functions greater than $\bm{q}$. Finally, by \cite[Theorems 1 and 2]{MR331477} (which study the composition limits of probability generating functions in the multi-type case), we conclude that this sequence converges to $\bm{q}$.

\subsection*{Outline}

In Section~\ref{sect1}, we present the models of multi-type Galton-Watson processes in the classical case, in varying environments, and in random environments. We recall results on the probability of extinction in these models.

In Section~\ref{sect2}, we define the model of multi-type Galton-Watson in dynamical environments. We will then define the probability generating function $\bm{\varphi}$, the probability of extinction $\bm{q}$, and the set of bad environments $N$.

In Section~\ref{sect3}, we present the main theorems of this article. Theorem~\ref{extmult} provides a criterion to determine the regime of a Galton-Watson process. Theorem~\ref{thmcontinuite} shows, under some hypotheses, that the continuity of $x\in\mathbb{X}\mapsto\bm{\mu}_x$ is indeed preserved by $x\in\mathbb{X}\mapsto \bm{q}(x)$ in the uniformly supercritical case.

Finally, in Section~\ref{sect4}, we provide the proofs of the results presented in this article.

\tableofcontents
\newpage

\section{Other Multi-type Galton-Watson processes}\label{sect1}

In this section, we present the models of multi-type Galton-Watson processes in the classical case, varying environments and random environments. We provide results on the probability of extinction in these models.

\subsection{Classical case}

We now formally introduce the model of multi-type Galton-Watson processes in the classical case. 
Let $d \in \mathbb{N}^*$ be the number of types (which is fixed in this chapter) and, for each $i \in \llbracket1,d\rrbracket$, let $\mu^{(i)} \in \mathcal{P}(\mathbb{N}^d)$ be a probability measure on $\mathbb{N}^d$. The multi-type Galton–Watson process associated with the family of laws of reproduction $\bm{\mu}=(\mu^{(i)})_{1 \leq i \leq d}$ is the sequence of random vectors $(\bm{Z}_n)_{n\in\mathbb{N}}$ in $\mathbb{N}^d$ defined recursively by:
\begin{align}
\left\{\begin{array}{ll}
\bm{Z}_0 &= \bm{z}_0 \in \mathbb{N}^d ,\\
\bm{Z}_{n+1}^{(j)} &= \sum\limits_{i=1}^d \left( \underset{k=1}{\overset{Z_n^{(i)}}{\sum}} Y_{n,k}^{(i,j)} \right) \text{ for all } j \in \llbracket1,d\rrbracket \text{ and } n \in \mathbb{N}, \label{eqmult}
\end{array}\right.
\end{align}
where $(\bm{Y}_{n,k}^{(i)})_{(n,k,i) \in \mathbb{N}^2 \times \llbracket1,d\rrbracket}$ is a family of independent random vectors such that \newline $\bm{Y}_{n,k}^{(i)} = (Y_{n,k}^{(i,1)}, \dots, Y_{n,k}^{(i,d)})$ is distributed according to $\mu^{(i)}$ for all $n,k\in\mathbb{N}$ and $i\in\llbracket1,d\rrbracket$. For all $n,k\in\mathbb{N}$ and $i\in\llbracket1,d\rrbracket$, the vector $\bm{Y}_{n,k}^{(i)} = (Y_{n,k}^{(i,1)}, \dots, Y_{n,k}^{(i,d)})$ represents the number of descendants of the $k$-th individual of type $i$ in the $n$-th generation, where each component $Y_{n,k}^{(i,j)}$ denotes the total number of offspring of type $j$, while the vector $\bm{Z}_n = (Z_n^{(1)}, \dots, Z_n^{(d)})$ represents the composition of the population at the $n$-th generation, where each component $Z_n^{(j)}$ denotes the total number of individuals of type $j$.

\begin{figure}[!ht]
\centering
\begin{tikzpicture}[scale=0.45, transform shape]
  \draw[->, thick, line width=1.5pt] (11.5, -4.5) -- (11.5, 4); 
  
  \foreach \y/\label in {-3.2/0, -1.4/1, 0.4/2, 2.2/3} {
      \draw[thick,line width=1.5pt] (11.3, \y) -- (11.7, \y); 
      \node at (11, \y) {\fontsize{25}{11}\selectfont\(\label\)}; 
  }

  \node[anchor=west] at (12, 4) {\fontsize{25}{11}\selectfont $n$};

 \coordinate (x0)    at (18.5,-3.2);
 \fill (x0)    circle (4pt);
 \coordinate (x1)    at (15.5,-1.4);
 \draw[Prune] (x1)    circle (4pt);
 \coordinate (x2)    at (21.5,-1.4);
 \fill (x2)    circle (4pt);
 \coordinate (x3)    at (13.5,0.4);
 \draw[Prune] (x3)    circle (4pt);
 \coordinate (x4)    at (15.5,0.4);
 \draw[Prune] (x4)    circle (4pt);
 \coordinate (x5)    at (17.5,0.4);
 \fill (x5)    circle (4pt);
  \coordinate (x6)    at (21.5,0.4);
 \draw[Prune] (x6)    circle (4pt);
 \coordinate (x7)    at (14.5,2.2);
 \fill (x7)    circle (4pt);
 \coordinate (x8)    at (16.5,2.2);
 \fill (x8)    circle (4pt);
  \coordinate (x9)    at (18.5,2.2);
 \draw[Prune] (x9)    circle (4pt);

 \draw[-,line width=1pt] (x0) to (x1);
 \draw[-,line width=1pt] (x0) to (x2);
 \draw[-,line width=1pt] (x1) to (x3);
 \draw[-,line width=1pt] (x1) to (x4);
 \draw[-,line width=1pt] (x1) to (x5);
 \draw[-,line width=1pt] (x2) to (x6);
 \draw[-,line width=1pt] (x4) to (x7);
 \draw[-,line width=1pt] (x4) to (x8);
 \draw[-,line width=1pt] (x5) to (x9);

 \node at (27,-3.2){\fontsize{25}{11}\selectfont$\bm{Z}_0=(1,0)$};
 \node at (27,-1.4){\fontsize{25}{11}\selectfont$\bm{Z}_1=(1,1)$};
 \node at (27,0.4){\fontsize{25}{11}\selectfont$\bm{Z}_2=(1,3)$};
 \node at (27,2.2){\fontsize{25}{11}\selectfont$\bm{Z}_3=(2,1)$};

\node[anchor=west, xshift=7pt,  text=black] at (x0) {\fontsize{20}{11}\selectfont \(\bm{Y}_{0,1}^{(1)}\)};
\node[anchor=west, xshift=7pt,  text=Prune] at (x1) {\fontsize{20}{11}\selectfont \(\bm{Y}_{1,1}^{(2)}\)};
\node[anchor=west, xshift=7pt,  text=black] at (x2) {\fontsize{20}{11}\selectfont \(\bm{Y}_{1,1}^{(1)}\)};
\node[anchor=west, xshift=7pt,  text=Prune] at (x3) {\fontsize{20}{11}\selectfont \(\bm{Y}_{2,1}^{(2)}\)};
\node[anchor=west, xshift=7pt,  text=Prune] at (x4) {\fontsize{20}{11}\selectfont \(\bm{Y}_{2,2}^{(2)}\)};
\node[anchor=west, xshift=7pt,  text=black] at (x5) {\fontsize{20}{11}\selectfont \(\bm{Y}_{2,1}^{(1)}\)};
\node[anchor=west, xshift=7pt,  text=Prune] at (x6) {\fontsize{20}{11}\selectfont \(\bm{Y}_{2,3}^{(2)}\)};
\node[anchor=west, xshift=7pt,  text=black] at (x7) {\fontsize{20}{11}\selectfont \(\bm{Y}_{3,1}^{(1)}\)};
\node[anchor=west, xshift=7pt,  text=black] at (x8) {\fontsize{20}{11}\selectfont \(\bm{Y}_{3,2}^{(1)}\)};
\node[anchor=west, xshift=7pt,  text=Prune] at (x9) {\fontsize{20}{11}\selectfont \(\bm{Y}_{3,1}^{(2)}\)};
\end{tikzpicture}
\caption{Schematic tree representation of a classical multi-type Galton-Watson process. Colors denote the two different types.}\label{Tfig1mult}
\end{figure}

In the multi-type setting, extinction occurs when all types vanish simultaneously. We define the extinction set as:
\begin{align*}
\text{Ext} \defeq \bigcup_{n\geq 0}\{\bm{Z}_n = \bm{0}\},
\end{align*}
where $\bm{0} = (0, \dots, 0) \in \mathbb{N}^d$. The probability of extinction is then represented by a vector $\bm{q} = (q_1, \dots, q_d) \in [0,1]^d$, where for each $i \in \llbracket1,d\rrbracket$, $q_i$ is the probability of extinction starting from a single individual of type $i$:
\begin{align*}
    q_i \defeq \mathbb{P}(\text{Ext} \mid \bm{Z}_0 = \bm{e}_i),
\end{align*}
with $\bm{e}_i$ denoting the $i$-th vector of the canonical basis of $\mathbb{R}^d$. To characterize this vector, we introduce the multi-type probability generating function $\bm{\varphi} = (\varphi_1, \dots, \varphi_d):[0,1]^d \to [0,1]^d$, where for each $i \in \llbracket1,d\rrbracket$, the function $\varphi_i : [0,1]^d \to [0,1]$ is associated with the law $\mu^{(i)} \in \mathcal{P}(\mathbb{N}^d)$ and defined by:
\begin{align*}
\varphi_i(\bm{s}) = \sum_{k \in \mathbb{N}^d} \mu^{(i)}(k) \left( \prod_{j=1}^d s_j^{k_j} \right)  \text{ for all } \bm{s} = (s_1, \dots, s_d) \in [0,1]^d.
\end{align*}

\begin{definition}
Let $M = (m_{ij})_{1 \leq i,j \leq d}$ be the square matrix of size $d \times d$ where each entry $m_{ij}$ represents the expected number of individuals of type $j$ produced by a single parent of type $i$:
\begin{align*}
m_{ij} \defeq \mathbb{E}[Z_1^{(j)} \mid \bm{Z}_0 = \bm{e}_i] = \frac{\partial \varphi_i}{\partial s_j}(\bm{1}).
\end{align*}
The matrix $M$ is called the \textbf{mean matrix} of the process. We denote by $\rho$ its \textbf{spectral radius}, which is the largest positive eigenvalue of $M$ according to the Perron-Frobenius theorem.
\end{definition}

\begin{definition}
A multi-type Galton-Watson process $(\bm{Z}_n)_{n\in\mathbb{N}}$ is said to be:\begin{itemize}
    \item \textbf{positively regular} if the matrix $M^N$ is positive for some $N\in\mathbb{N}$,
    \item \textbf{singular} if each individual has exactly one offspring, regardless of type.
\end{itemize} 
\end{definition}

\begin{theorem}\cite{MR26280,MR24090}
Let $(\bm{Z}_n)_{n\in\mathbb{N}}$ be a positively regular and not singular multi-type Galton-Watson process. The probability of extinction vector $\bm{q}$ satisfies the following properties:
\begin{itemize}
    \item if $\rho \leq 1$, then $\bm{q} = \bm{1}$ (almost sure extinction),
    \item if $\rho > 1$, then $\bm{q} < \bm{1}$ (positive probability of survival).
\end{itemize}
Furthermore, $\bm{q}$ is a solution of the fixed-point equation:
\begin{align*}
\bm{q} = \bm{\varphi}(\bm{q}).
\end{align*}
\end{theorem}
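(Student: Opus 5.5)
We prove the classical extinction theorem in three steps: first identify $\bm{q}$ as a fixed point of $\bm{\varphi}$, then treat $\rho\le 1$ and $\rho>1$ separately.

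\textbf{Step 1: $\bm{q}$ is a fixed point.} Let $\bm{\varphi}_n$ denote the $n$-fold composition of $\bm{\varphi}$. Conditioning on the first generation and using the independence in \eqref{eqmult}, one gets $\mathbb{P}(\bm{Z}_n=\bm{0}\mid \bm{Z}_0=\bm{e}_i)=\varphi_{n,i}(\bm{0})$. Since $\bm{\varphi}$ is coordinatewise nondecreasing on $[0,1]^d$, the sequence $\bm{\varphi}_n(\bm{0})$ is nondecreasing. Because $\mathrm{Ext}$ is the increasing union of the events $\{\bm{Z}_n=\bm{0}\}$, this sequence converges to $\bm{q}$. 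Continuity of $\bm{\varphi}$ on $[0,1]^d$ (power series with nonnegative coefficients and sum $1$ at $\bm{1}$) then gives $\bm{q}=\bm{\varphi}(\bm{q})$. Moreover, if $\bm{s}\in[0,1]^d$ satisfies $\bm{\varphi}(\bm{s})=\bm{s}$, induction from $\bm{0}\le \bm{s}$ gives $\bm{\varphi}_n(\bm{0})\le \bm{s}$. Hence $\bm{q}$ is the minimal fixed point in $[0,1]^d$.

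\textbf{Step 2: the case $\rho\le1$.} First reduce to a positive mean matrix. Because $\bm{\varphi}_N$ has mean matrix $M^N>0$ and the same extinction vector, we may assume $M>0$. Let $v>0$ be the Perron right eigenvector, $Mv=\rho v$. Then $\mathbb{E}[\langle \bm{Z}_n,v\rangle]=\rho^n\langle \bm{z}_0,v\rangle$ is bounded, so $W_n=\langle \bm{Z}_n,v\rangle/\rho^n$ is a nonnegative martingale and $\langle \bm{Z}_n,v\rangle$ stays bounded in mean. The main point, and I expect the \textbf{main obstacle}, is to show that the only alternative to extinction, namely $\bm{Z}_n\to\infty$, is impossible here. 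I would proceed as follows:
\begin{itemize}
\item Non-singularity together with positive regularity implies that for every nonzero state $\bm{z}$ there is, within a bounded number of steps, a positive probability either of reaching $\bm{0}$ or of strictly increasing the population. This is the standard argument showing that every nonzero state is transient.
\item Transience then gives that, almost surely, either $\bm{Z}_n\to\bm{0}$ or $|\bm{Z}_n|\to\infty$.
\item When $\rho<1$, Markov's inequality gives $\mathbb{P}(\bm{Z}_n\neq\bm{0})\le \mathbb{E}|\bm{Z}_n|\to0$ directly.
\item When $\rho=1$, use instead a convexity argument on $\bm{\varphi}$ along the direction $v$. Convexity gives $\bm{1}-\bm{\varphi}(\bm{1}-tv)\le tMv=tv$, with strict inequality in some coordinate by non-singularity. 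This rules out a fixed point $\bm{s}<\bm{1}$, so the minimal fixed point $\bm{q}$ equals $\bm{1}$.
\end{itemize}

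\textbf{Step 3: the case $\rho>1$.} Here we construct a point $\bm{s}<\bm{1}$ with $\bm{\varphi}(\bm{s})\le\bm{s}$. Take $\bm{s}=\bm{1}-tv$ for small $t>0$. A Taylor expansion gives $\bm{\varphi}(\bm{1}-tv)=\bm{1}-tMv+o(t)=\bm{1}-t\rho v+o(t)$, which is at most $\bm{1}-tv$ for small $t$. By monotonicity, $\bm{\varphi}_n(\bm{0})\le\bm{\varphi}_n(\bm{s})\le\bm{s}$ for all $n$, hence $\bm{q}\le\bm{s}<\bm{1}$. If $M$ has infinite entries the expansion fails, so first truncate the offspring laws to obtain a finite mean matrix with spectral radius still $>1$; truncation can only increase $\bm{q}$.
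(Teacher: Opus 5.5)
The paper does not prove this statement; it quotes it from the classical literature, so your proposal has to be judged on its own. Step 1, Step 3, and the Markov bound $\mathbb{P}_i(\bm{Z}_n\neq\bm{0})\le (M^n\bm{1})_i\to 0$ for $\rho<1$ are sound. There is, however, a genuine gap in the case $\rho=1$.

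\textbf{The gap at $\rho=1$.} Convexity along the single ray $t\mapsto\bm{1}-tv$ shows only that no point of the form $\bm{1}-tv$, $t>0$, is a fixed point. It says nothing about:
\begin{itemize}
\item a fixed point $\bm{s}<\bm{1}$ for which $\bm{1}-\bm{s}$ is not parallel to $v$;
\item a fixed point with some coordinates equal to $1$ and others strictly smaller.
\end{itemize}
So ``this rules out a fixed point $\bm{s}<\bm{1}$, so $\bm{q}=\bm{1}$'' does not follow.

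\textbf{How to repair it.} The same convexity, taken along the segment from $\bm{s}$ to $\bm{1}$ (equivalently, the mean value theorem with nondecreasing partial derivatives), gives $\bm{1}-\bm{\varphi}(\bm{s})\le M(\bm{1}-\bm{s})$ for every $\bm{s}\in[0,1]^d$. At a fixed point $\bm{s}\neq\bm{1}$, set $\bm{u}=\bm{1}-\bm{s}\ge\bm{0}$, $\bm{u}\neq\bm{0}$; the inequality reads $\bm{u}\le M\bm{u}$. Pair this with the left Perron vector $w>0$, $w^{T}M=w^{T}$. This gives $w^{T}(M\bm{u}-\bm{u})=0$, hence $M\bm{u}=\bm{u}$. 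Since $M>0$, we get $\bm{u}=M\bm{u}>\bm{0}$, so $\bm{u}=tv$ by uniqueness of the Perron vector. Only at this point does your ray argument apply.

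\textbf{The strict inequality.} ``Strict inequality in some coordinate by non-singularity'' also needs an argument. Strictness in coordinate $i$ requires $\mu^{(i)}(\{|\bm{k}|\ge 2\})>0$. Non-singularity alone allows every type to have at most one child, with some type dying childless; then every $\varphi_i$ is affine and there is no strict convexity. You must use $\rho=1$: if no type could have two or more children, $M$ would be substochastic. Then $w^{T}(\bm{1}-M\bm{1})=w^{T}\bm{1}-w^{T}\bm{1}=0$ with $w>0$ forces $M\bm{1}=\bm{1}$, i.e.\ the process is singular. Similarly, the reduction to $\bm{\varphi}_N$ silently uses that $\bm{\varphi}_N$ is still non-singular. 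This is true under positive regularity, but it should be checked.

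\textbf{The unused bullets.} Your first two bullets are never used, and as stated they are not correct. A positive probability of strictly increasing the population does not prevent a return to $\bm{z}$ when individuals can die childless, so it does not yield transience. Harris's transience lemma needs a finer argument than this. If you want that route for $\rho=1$, you need the genuine lemma; then a.s.\ convergence of the nonnegative martingale $\langle\bm{Z}_n,v\rangle$ rules out $|\bm{Z}_n|\to\infty$. The convexity route, with the Perron--Frobenius step above, is self-contained and shorter.
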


As in the single-type case, Kesten and Stigum~\cite{10.1214/aoms/1177699266} give the limit in law of the process by normalizing by the spectral radius.

\subsection{Varying and random environments}

We will not go into detail about the definition of multi-type Galton-Watson processes in varying and random environments. We must preserve the recurrence relation \eqref{eqmult} while changing the law of reproduction:\begin{itemize}
    \item in a deterministic way for the varying environments,
    \item randomly (and independently of the size of the population) for the random environments.
\end{itemize}

Multi-type Galton-Watson processes in varying environments have been studied notably in~\cite{MR1258179,MR1459270,MR1708214,MR2392694}. 

In the following, we will focus on the multi-type Galton-Watson processes in random environments. We further assume that, almost surely:
\begin{align}
    0 \leq \mu^{(i)}_{\xi_0}(\bm{0}) + \sum_{j=1}^d \mu^{(i)}_{\xi_0}(\bm{e}_j) < 1, \quad &\text{for } 1 \leq i \leq d,\label{eqmult1}\\
  \text{and}  \qquad m_{ij}(\xi_0) =  \frac{\partial \varphi_{i,\xi_0}(\bm{1})}{\partial s_j} < +\infty, \quad &\text{for all } 1 \leq i, j \leq d.\nonumber
\end{align}

In the random case, the key object for studying the extinction problem is :
\begin{equation*}
\pi \coloneqq \lim_{n \to +\infty} \frac{1}{n} \mathbb{E}[\ln \|M (\xi_{n-1})\dots M(\xi_{0})\|],
\end{equation*}
where $\xi=(\xi_n)_{n\in\N}$ is the environment and $M$ the mean matrix. The Lyapunov exponent $\pi$ exist and is finite by \cite{MR0121828} (under the hypotheses of Theorem~\ref{multext}). It represents the average growth rate of the population size. The following result gives the probability of extinction of the process:

\begin{theorem}[\textnormal{\cite[Theorem 8]{MR0298780} and \cite[Theorem 2]{10.1214/aop/1176996659}}]\label{multext}
Let $(\bm{Z}_n)_{n\in\mathbb{N}}$ be a multi-type Galton-Watson process in a stationary and ergodic random environment. We assume that there exist constants $0 < C \leq D < +\infty$ such that, almost surely:
    \begin{equation}
    C \leq \min_{i,j} m_{ij}(\xi_0) \leq \max_{i,j} m_{ij}(\xi_0) \leq D \quad \text{and}\quad\max_{i,j,k} \frac{\partial^2 \varphi_{k,\xi_0}}{\partial s_i \partial s_j}(\bm{1}) \leq D.\label{eqmult2}
    \end{equation}
Under these conditions, the probability of extinction vector $\bm{q}$ satisfies:
\begin{itemize}
    \item If $\pi \leq 0$, then $\bm{q} = \bm{1}$ (almost sure extinction).
    \item If $\pi > 0$, then $\bm{q} < \bm{1}$ (positive probability of survival).
\end{itemize}
\end{theorem}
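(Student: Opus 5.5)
Theorem~\ref{multext} is the multi-type Athreya--Karlin/Kaplan criterion, and I would prove it by comparing the generating-function iterates with the products of mean matrices along the environment. Write $\bm{\varphi}_n=\bm{\varphi}_{\xi_0}\circ\cdots\circ\bm{\varphi}_{\xi_{n-1}}$ and $M_n=M(\xi_0)\cdots M(\xi_{n-1})$. Conditionally on $\xi$, $\mathbb{P}(\bm{Z}_n=\bm{0}\mid \bm{Z}_0=\bm{e}_i,\xi)=\varphi_{n,i}(\bm{0})$, and this increases to $q_i(\xi)$. Condition \eqref{eqmult2} makes every $M_n$ strictly positive. A Birkhoff-contraction (Hilbert projective metric) argument then shows that all entries of $M_n$ are comparable up to a factor depending only on $C,D$. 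In particular $\frac1n\ln (M_n)_{ij}\to\pi$ almost surely for every $i,j$, by Furstenberg--Kesten \cite{MR0121828}.

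\textbf{Case $\pi<0$.} By Markov's inequality,
\[
1-\varphi_{n,i}(\bm{0})=\mathbb{P}(\bm{Z}_n\neq\bm{0}\mid \bm{Z}_0=\bm{e}_i,\xi)\le \mathbb{E}\bigl[|\bm{Z}_n|\bigm| \bm{Z}_0=\bm{e}_i,\xi\bigr]=(M_n\bm{1})_i ,
\]
and the right-hand side decays exponentially. Hence $\bm{q}=\bm{1}$.

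\textbf{Case $\pi>0$.} I need a lower bound of Agresti/Kaplan type. Using the second-derivative bound $D$ and convexity, for $\bm{s}\in[0,1]^d$,
\[
\bm{1}-\bm{\varphi}_\xi(\bm{s})\ \ge\ M(\xi)(\bm{1}-\bm{s})-\tfrac{D}{2}\|\bm{1}-\bm{s}\|^2\bm{1}.
\]
Iterating this inequality and using the entry comparability, I aim for a Kaplan-style bound
\[
\frac{1}{1-\varphi_{n,i}(\bm{0})}\le c\Bigl(\frac{1}{(M_n\bm{1})_i}+\sum_{k=0}^{n-1}\frac{1}{(M_k\bm{1})_i}\Bigr).
\]
Since $(M_k\bm{1})_i$ grows like $e^{k\pi}$, the series converges almost surely, so $1-q_i(\xi)>0$. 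The non-singularity assumption \eqref{eqmult1} and stationarity ensure that the constant $c$ is uniform. Ergodicity turns the almost-sure statement into the $0$--$1$ dichotomy.

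\textbf{Critical case $\pi=0$.} This is the main obstacle. Here I would follow Kaplan and use the reverse (upper) inequality
\[
\frac{1}{1-\varphi_{n,i}(\bm{0})}\ge c'\sum_{k<n}\frac{1}{(M_k\bm{1})_i},
\]
which follows from the fact that the fractional-linear comparison function is dominated using the lower bound $C$ and \eqref{eqmult1}. When $\pi=0$, a Guivarc'h-type recurrence argument (or Kesten--Spitzer--Kozlov style) for the random walk $\ln (M_k\bm{1})_i$ shows that $\sum_k 1/(M_k\bm{1})_i=\infty$ almost surely. The only delicate point is that $\ln\|M_k\|$ is subadditive rather than a sum. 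I would handle this through the comparability constant, which turns $\ln (M_k\bm{1})_i$ into an additive cocycle up to a bounded error over the projective skew-product. This gives $\bm{q}=\bm{1}$.
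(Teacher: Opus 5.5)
The paper gives no proof of this theorem; it quotes it from Athreya--Karlin and Kaplan. Your sketch therefore has to stand on its own. Your case $\pi<0$ is correct. The case $\pi=0$, which is where the theorem has real content, has a genuine gap. The estimate
\[
\frac{1}{1-\varphi_{n,i}(\bm{0})}\ \ge\ c'\sum_{k<n}\frac{1}{(M_k\bm{1})_i}
\]
is only asserted, and the remark about a dominated fractional-linear comparison function does not prove it.

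What you actually need is a one-step bound $\bm{1}-\bm{\varphi}_\xi(\bm{1}-\bm{t})\le M(\xi)\bm{t}/(1+b(\xi)|\bm{t}|_1)$, valid for \emph{all} $\bm{t}\in[0,1]^d$ with comparable entries, not only near $\bm{0}$. This is a multi-type version of the lower bound on the single-type shape function $\frac{1}{1-f(s)}-\frac{1}{f'(1)(1-s)}$. One way to obtain it is to restrict $\varphi_i$ to the ray $\bm{1}-r\bm{v}$, which gives the generating function of a thinned offspring count. You then need a way to compose these bounds, and this is not automatic. The map $\bm{t}\mapsto M\bm{t}/(1+b|\bm{t}|_1)$ need not be monotone for the componentwise order when $d\ge2$. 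So you have to iterate through the monotone map $\bm{t}\mapsto\bm{1}-\bm{\varphi}_\xi(\bm{1}-\bm{t})$ while keeping the iterates inside $[0,1]^d$ and in a controlled direction.

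Moreover, $c'$ cannot be uniform. Condition \eqref{eqmult1} holds only pointwise almost surely. Condition \eqref{eqmult2} allows environments arbitrarily close to singular (an offspring law almost concentrated on a single child), and their shape constant is close to $0$. So you only get stationary random weights $b(\xi_k)>0$. The divergence of $\sum_k b(\xi_k)/(M_k\bm{1})_i$ then requires recurrence of the skew product into $\{b\ge\varepsilon\}\times[-K,K]$, not merely recurrence of the walk $\ln(M_k\bm{1})_i$. The relevant tool is conservativity of zero-mean integrable cocycles over an ergodic base, as in Atkinson's theorem.

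There are three smaller points. First, in the supercritical case your plan works if you iterate lower bounds through the monotone map $\bm{t}\mapsto\bm{1}-\bm{\varphi}_\xi(\bm{1}-\bm{t})$ rather than through the non-monotone quadratic minorant. Shrinking a lower bound by a scalar factor is always allowed. This lets you keep the iterates proportional to $M(\xi_k)\cdots M(\xi_{n-1})\bm{1}$ and reduce to a scalar recursion. The resulting constant depends only on $C$, $D$ and $d$; condition \eqref{eqmult1} plays no role there, contrary to what you wrote.

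Second, your reduction of $\ln(M_k\bm{1})_i$ to a Birkhoff sum plus $O(1)$ is the right idea. However, it needs the exponential Hilbert-metric contraction so that the errors are summable. Comparability of entries alone only gives an $O(1)$ error per step.

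Third, your $M_n=M(\xi_0)\cdots M(\xi_{n-1})$ is the correct product for $m_{ij}=\partial\varphi_i/\partial s_j$, but the displayed $\pi$ multiplies in the opposite order. Furstenberg--Kesten does not identify the two exponents. For a non-reversible stationary environment they can differ, already for a periodic environment with random phase, since $\rho(ABC)\neq\rho(CBA)$ in general. You should state explicitly that your criterion concerns the exponent of the product in the branching order.
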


\begin{remark}
Equation~\eqref{eqmult2} implies that the process is positively regular (with $N=1$), while Equation~ \eqref{eqmult1} implies that the process is not singular.
\end{remark}

In the random case, the speed of extinction and the limit in law are studied notably in \cite{MR3846842,MR4461477,MR4564425,MR4838435}.

\section{Multi-type Galton-Watson processes in dynamical environments}\label{sect2}

In this section, we define the model of multi-type Galton-Watson in dynamical environments. We will then define the probability generating function $\bm{\varphi}$, the probability of extinction $\bm{q}$, and the set of bad environments $\bm{N}$.

\subsection{The model}

In this subsection, we present the model of multi-type Galton-Watson processes in dynamical environments.

We consider $(\mathbb{X},\mathcal{B}(\mathbb{X}),T)$ a discrete-time topological dynamical system, where: \begin{itemize}
\item $(\mathbb{X},d)$ is a compact metric space equipped with its Borel algebra $\mathcal{B}(\mathbb{X})$,
\item $T:\mathbb{X}\to\mathbb{X}$ is a continuous map.
\end{itemize}
Additionally, let:
\begin{align*}
\bm{\mu}:\left\{
\begin{array}{lcl}
\mathbb{X}&\to&\mathcal{P}(\mathbb{N}^d)^d\\
x&\mapsto&\bm{\mu}_x=(\mu_x^{(1)},\ldots,\mu_x^{(d)})
\end{array}
\right. ,
\end{align*}
and assume Hypotheses~(H\ref{hmult}).

\encad{
\begin{hyp}[H\ref{hmult}]\label{hmult}~
\begin{enumerate}[label=\alph*)]
\item $x\in\mathbb{X}\mapsto \bm{\mu}_x$ is continuous.\label{1a}
\item $0 \leq \mu^{(i)}_{x}(\bm{0}) + \sum\limits_{j=1}^d \mu^{(i)}_{x}(\bm{e}_j) < 1$ for all $x\in\mathbb{X}$ and $i\in\llbracket 1,d\rrbracket$.\label{1b}
\end{enumerate}
\end{hyp}}\medskip

For each initial environment $x \in \mathbb{X}$, let $(\bm{Z}_n(x))_{n\in\mathbb{N}}$ be the sequence of random vectors in $\mathbb{N}^d$ defined recursively by Equation~\ref{eqmult}, where $(\bm{Y}_{n,k}^{(i)})_{(n,k,i) \in \mathbb{N}^2 \times \llbracket1,d\rrbracket}$ is a family of independent random vectors such that, for all $n, k$ and $i$, $\bm{Y}_{n,k}^{(i)}$ is distributed according to $\mu_{T^n x}^{(i)}$.

The sequence $(\bm{Z}_n(x))_{n\in\mathbb{N}}$ is called \textbf{the multi-type Galton–Watson process in dynamical environments} associated with the discrete-time dynamical system $(\mathbb{X},\mathcal{B}(\mathbb{X}),T)$, the law of reproduction $\bm{\mu}$, and the initial environment $x\in\mathbb{X}$.

The following example will be used several times.

\begin{example}\label{ex_multitype} 
Let $d=2$. We consider the following multi-type process:
\begin{itemize}
\item $\mathbb{X}\defeq\RZ$,
\item $ T\defeq \left\{
    \begin{array}{lcl}\RZ&\to&\RZ\\
    x &\mapsto &2x \text{ modulo } 1\end{array}\right.,$
    \item For all $\lambda\in\mathbb{R}$ and $x\in\RZ$, the reproduction law $\bm{\mu}_{\lambda, x} = \left(\mu_{\lambda, x}^{(1)}, \mu_{\lambda, x}^{(2)}\right)$ is defined as follows:
    \begin{itemize}
        \item $\mu_x^{(1)} \sim \left( \text{Pois}\left(e^{\lambda - \cos(2\pi x)}\right), \text{Pois}\left(e^{\lambda - \sin(2\pi x)}\right) \right)$,
        \item $\mu_x^{(2)} \sim \left( \text{Pois}\left(e^{\lambda + \sin(2\pi x)}\right), \text{Pois}\left(e^{\lambda - \cos(2\pi x)}\right) \right)$,
    \end{itemize}
    where the components of each couple are independent.
\end{itemize}
For each $\lambda \in \mathbb{R}$, this defines a multi-type Galton-Watson process in dynamical environments satisfying Hypotheses~(H\ref{hmult}).
\end{example}

\subsection{Extinction and generating functions}

 We will start with probability generating functions.

\begin{definition}\label{mula2def2}
The probability generating function of the law of reproduction $\bm{\mu}$ is defined on $\mathbb{X}\times [0,1]^d$ by the vector-valued map $\bm{\varphi}(x, \bm{s}) = (\varphi_1(x, \bm{s}), \dots, \varphi_d(x, \bm{s}))$, where for each $i \in \llbracket1,d\rrbracket$:
\begin{align*}
\varphi_i(x, \bm{s}) \defeq \sum_{\bm{k} \in \mathbb{N}^d} \mu_x^{(i)}(\bm{k}) \bm{s}^{\bm{k}} = \mathbb{E}[\bm{s}^{Y(x,i)}],
\end{align*}
where $Y(x,i)$ follows the law $\mu_{x}^{(i)}$.

For all $n \in \mathbb{N}$, the probability generating function of the law of $\bm{Z}_n(x)$ is defined on $\mathbb{X}\times [0,1]^d$ by:
\begin{align*}
\bm{\varphi}^{(n)}(x, \bm{s}) \defeq \big(\mathbb{E}[\bm{s}^{\bm{Z}_n(x)}\mid \bm{Z}_0 = \bm{e}_i]\big)_{i\in\llbracket 1,d\rrbracket}.
\end{align*}
\end{definition}

\begin{example}[Example~\ref{ex_multitype}]
We can explicitly compute the probability generating function. Let $\lambda \in \mathbb{R}$. Then, for all $x \in \RZ$ and $\bm{s} = (s_1, s_2) \in [0,1]^2$, the components of $\bm{\varphi}_\lambda(x, \bm{s})$ are given by:
\begin{align*}
&\varphi_{1,\lambda}(x, \bm{s}) = \exp\left( e^{\lambda-\cos(2\pi x)}(s_1-1) + e^{\lambda-\sin(2\pi x)}(s_2-1) \right), \\
&\varphi_{2,\lambda}(x, \bm{s}) = \exp\left( e^{\lambda+\sin(2\pi x)}(s_1-1) + e^{\lambda-\cos(2\pi x)}(s_2-1) \right). 
\end{align*}
\end{example}

We find a recurrence relation on the population at different generations.

\begin{prop}\label{mulprop5}
For all $x \in \mathbb{X}$, $n,k\in \mathbb{N}$, and $\bm{s} \in [0,1]^d$:
\begin{align}\label{muleq5}
\bm{\varphi}^{(n+k)}(x, \bm{s}) = \bm{\varphi}^{(k)}(x, \bm{\varphi}^{(n)}(T^k x, \bm{s})).
\end{align}
\end{prop}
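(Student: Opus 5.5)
We prove the identity by conditioning on the population at generation $k$ and using the branching property together with the shift structure of the environment. Write $\bm{Z}_n(x)$ for the process started from environment $x$. The first observation is that, conditionally on $\mathcal{F}_k=\sigma(\bm{Y}^{(i)}_{m,l} : m<k)$, the process $(\bm{Z}_{k+n}(x))_{n\in\mathbb{N}}$ is again a multi-type Galton--Watson process in dynamical environments, now with initial environment $T^kx$ and initial population $\bm{Z}_k(x)$. Indeed, its offspring vectors at generation $k+n$ are the $\bm{Y}^{(i)}_{k+n,l}$, which are independent of $\mathcal{F}_k$ and have law $\mu^{(i)}_{T^{k+n}x}=\mu^{(i)}_{T^n(T^kx)}$. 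In particular, the generation-$(k+n)$ population is obtained by running the recursion \eqref{eqmult} for $n$ steps from $\bm{Z}_k(x)$ with environments $T^kx, T^{k+1}x,\dots$.

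The second step is the branching decomposition. Given $\bm{Z}_k(x)=\bm{z}$, we label the $z_i$ individuals of type $i$ at generation $k$. Then $\bm{Z}_{k+n}(x)$ is the sum over these individuals of the generation-$n$ descendants of each one. These descendant subpopulations are independent, because they use disjoint families of the $\bm{Y}$'s, and each one started from type $i$ has the law of $\bm{Z}_n(T^kx)$ under $\bm{Z}_0=\bm{e}_i$. Hence
\begin{align*}
\mathbb{E}\big[\bm{s}^{\bm{Z}_{k+n}(x)}\mid \mathcal{F}_k\big]=\prod_{i=1}^d \varphi^{(n)}_i(T^kx,\bm{s})^{Z_k^{(i)}(x)}=\big(\bm{\varphi}^{(n)}(T^kx,\bm{s})\big)^{\bm{Z}_k(x)}.
\end{align*}
Taking expectations under $\bm{Z}_0=\bm{e}_j$ gives the generating function of $\bm{Z}_k(x)$ evaluated at $\bm{\varphi}^{(n)}(T^kx,\bm{s})\in[0,1]^d$, which is $\varphi^{(k)}_j(x,\bm{\varphi}^{(n)}(T^kx,\bm{s}))$. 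Doing this for every $j$ yields \eqref{muleq5}.

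The main obstacle is the rigour of the branching decomposition. The recursion \eqref{eqmult} indexes offspring by $(n,k,i)$ at the level of whole generations, not by genealogical lines, so the split of $\bm{Z}_{k+n}(x)$ into independent subtrees is not literally written in the model. I would handle this by induction on $n$ with $k$ fixed, which avoids genealogy altogether.

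\textbf{Base case.} For $n=0$ the identity is trivial.

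\textbf{Base case, one step.} For $n=1$, condition on $\mathcal{F}_{k}$. Independence of the $\bm{Y}_{k,l}^{(i)}$ gives $\mathbb{E}[\bm{s}^{\bm{Z}_{k+1}}\mid\mathcal{F}_k]=\bm{\varphi}(T^kx,\bm{s})^{\bm{Z}_k}$, hence $\bm{\varphi}^{(k+1)}(x,\bm{s})=\bm{\varphi}^{(k)}(x,\bm{\varphi}(T^kx,\bm{s}))$.

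\textbf{Inductive step.} Iterating the one-step case gives the closed form $\bm{\varphi}^{(m)}(x,\cdot)=\bm{\varphi}(x,\cdot)\circ\bm{\varphi}(Tx,\cdot)\circ\cdots\circ\bm{\varphi}(T^{m-1}x,\cdot)$. The general identity then follows from associativity of composition: split the composition for $m=n+k$ into its first $k$ factors and its last $n$ factors, and recognise the latter as $\bm{\varphi}^{(n)}(T^kx,\cdot)$.
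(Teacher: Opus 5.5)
Your proposal is correct and is essentially the paper's argument. Both derive the one-step identity $\bm{\varphi}^{(k+1)}(x,\bm{s})=\bm{\varphi}^{(k)}(x,\bm{\varphi}(T^kx,\bm{s}))$ by conditioning on generation $k$ and using independence of the generation-$k$ offspring vectors, then extend it by induction, which you make explicit via $\bm{\varphi}^{(m)}(x,\cdot)=\bm{\varphi}(x,\cdot)\circ\cdots\circ\bm{\varphi}(T^{m-1}x,\cdot)$ and associativity where the paper only says ``by induction on $k$'' (the heuristic subtree decomposition in your first two paragraphs is not needed).
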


\begin{figure}[!ht]
\center
\includegraphics[scale=0.5]{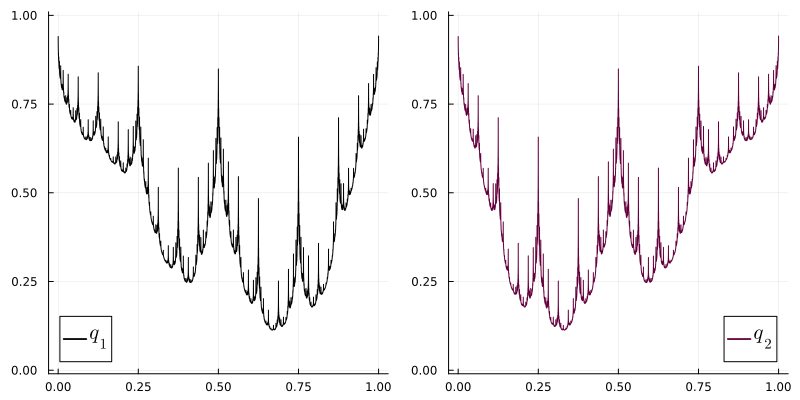}
\caption{Plot of the probability of extinction $\bm{q}_\lambda=(q_{1,\lambda},q_{2,\lambda})$ in the case of Example~\ref{ex_multitype} with $\lambda=-0.4$.}
\end{figure}

We define the probability of extinction, which depends on the initial environment $x\in\mathbb{X}$ and on the initial type $i \in \llbracket1,d\rrbracket$.

\begin{definition}\label{mula2def_q}
We define the probability of extinction vector $\bm{q}:\mathbb{X}\to[0,1]^d$ by setting, for all $x\in\mathbb{X}$ and each component $i \in \llbracket1,d\rrbracket$:
\begin{align*}
q_i(x)\defeq\mathbb{P}\Bigg(\bigcup_{n\geq 0}\{\bm{Z}_n(x)=\bm{0}\} \mid \bm{Z}_0 = \bm{e}_i \Bigg).
\end{align*}
\end{definition}

Probability generating functions can be used to characterize the probability of extinction.

\begin{prop}\label{mulpropq}
For all $x\in\mathbb{X}$:
\begin{align*}
\bm{q}(x)=\underset{n\to\infty}{\lim}\nearrow\big(\mathbb{P}(\bm{Z}_n(x)=\bm{0}\mid \bm{Z}_0 = \bm{e}_i)\big)_{i\in\llbracket 1,d\rrbracket}=\underset{n\to\infty}{\lim}\nearrow \bm{\varphi}^{(n)}(x,\bm{0}).
\end{align*}      
\end{prop}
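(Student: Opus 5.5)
The argument runs on the events $E_n\defeq\{\bm{Z}_n(x)=\bm{0}\}$, for a fixed environment $x\in\mathbb{X}$ and a fixed initial type $i\in\llbracket 1,d\rrbracket$. The state $\bm{0}$ is absorbing. If $\bm{Z}_n(x)=\bm{0}$, then every component $Z_n^{(i')}(x)$ vanishes, so each inner sum in the recursion \eqref{eqmult} defining $\bm{Z}_{n+1}(x)$ is empty and $\bm{Z}_{n+1}(x)=\bm{0}$. Hence $E_n\subset E_{n+1}$ for all $n$, and the extinction event $\bigcup_{n\ge 0}E_n$ is an increasing union. Continuity from below of the probability measure $\mathbb{P}(\cdot\mid \bm{Z}_0=\bm{e}_i)$ gives
\begin{align*}
q_i(x)=\mathbb{P}\Big(\bigcup_{n\ge0}E_n\mid \bm{Z}_0=\bm{e}_i\Big)=\lim_{n\to\infty}\nearrow\mathbb{P}(E_n\mid \bm{Z}_0=\bm{e}_i).
\end{align*}
This is the first equality, including the monotonicity.

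For the second equality, we evaluate the generating function of Definition~\ref{mula2def2} at $\bm{s}=\bm{0}$. We use the convention $0^0=1$, which is the standard one for generating functions and is needed for $\bm{\varphi}$ to be well defined on all of $[0,1]^d$. With this convention, $\bm{0}^{\bm{k}}=\prod_j 0^{k_j}$ equals $1$ if $\bm{k}=\bm{0}$ and $0$ otherwise, so $\bm{0}^{\bm{Z}_n(x)}=\mathds{1}_{\{\bm{Z}_n(x)=\bm{0}\}}$. Taking expectations gives
\begin{align*}
\varphi^{(n)}_i(x,\bm{0})=\mathbb{E}\big[\bm{0}^{\bm{Z}_n(x)}\mid \bm{Z}_0=\bm{e}_i\big]=\mathbb{P}(\bm{Z}_n(x)=\bm{0}\mid \bm{Z}_0=\bm{e}_i),
\end{align*}
for every $i$ and $n$. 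The second equality of the statement therefore holds termwise, and in particular the sequence $\bm{\varphi}^{(n)}(x,\bm{0})$ is also nondecreasing.

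Proposition~\ref{mulprop5} is not needed here. As an alternative check of the monotonicity, one could show $\bm{\varphi}^{(n+1)}(x,\bm{0})=\bm{\varphi}^{(n)}(x,\bm{\varphi}(T^nx,\bm{0}))\ge\bm{\varphi}^{(n)}(x,\bm{0})$, using that $\bm{\varphi}(T^nx,\bm{0})\ge\bm{0}$ and that generating functions are nondecreasing in each coordinate. The absorbing-state argument above already suffices, so there is no real obstacle. The only point needing care is the convention $0^0=1$ when identifying $\bm{\varphi}^{(n)}(x,\bm{0})$ with the probability of the event $\{\bm{Z}_n(x)=\bm{0}\}$.
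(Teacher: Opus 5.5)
Your proof is correct and follows essentially the same route as the paper. The paper likewise uses that $\{\bm{Z}_n(x)=\bm{0}\}$ is nondecreasing together with continuity from below, then identifies $\mathbb{P}_i(\bm{Z}_n(x)=\bm{0})$ with $\varphi^{(n)}_i(x,\bm{0})$; you are slightly more explicit than the paper, which appeals to ``the property of the probability generating function'' without spelling out the empty-sum argument or the convention $0^0=1$.
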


\begin{prop}\label{mulprop1}
Assume (H\ref{hmult}). Then, for all $x\in\mathbb{X}$:
\begin{align}\label{mule}
\bm{q}(x)=\bm{\varphi}(x, \bm{q}(Tx)).
\end{align}
\end{prop}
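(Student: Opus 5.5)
The plan is to pass to the limit in the composition identity of Proposition~\ref{mulprop5} with $k=1$, using the monotone characterization of $\bm{q}$ from Proposition~\ref{mulpropq}. Fix $x\in\mathbb{X}$. Applying \eqref{muleq5} with $k=1$ and $\bm{s}=\bm{0}$ gives, for every $n\in\mathbb{N}$,
\begin{align*}
\bm{\varphi}^{(n+1)}(x,\bm{0})=\bm{\varphi}^{(1)}\big(x,\bm{\varphi}^{(n)}(Tx,\bm{0})\big)=\bm{\varphi}\big(x,\bm{\varphi}^{(n)}(Tx,\bm{0})\big).
\end{align*}
Here we use that $\bm{\varphi}^{(1)}(x,\cdot)=\bm{\varphi}(x,\cdot)$. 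This holds because $\bm{Z}_1(x)$ started from $\bm{e}_i$ has law $\mu^{(i)}_x$, directly from the recursion \eqref{eqmult} with $\bm{Y}^{(i)}_{0,1}\sim\mu^{(i)}_{x}$.

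Next let $n\to\infty$. By Proposition~\ref{mulpropq}, applied at both $x$ and $Tx$, the left-hand side increases to $\bm{q}(x)$, and the inner argument $\bm{\varphi}^{(n)}(Tx,\bm{0})$ increases componentwise to $\bm{q}(Tx)$. It remains to justify
\begin{align*}
\lim_{n\to\infty}\bm{\varphi}\big(x,\bm{\varphi}^{(n)}(Tx,\bm{0})\big)=\bm{\varphi}\big(x,\bm{q}(Tx)\big).
\end{align*}
Each $\varphi_i(x,\cdot)=\sum_{\bm{k}}\mu^{(i)}_x(\bm{k})\bm{s}^{\bm{k}}$ is a power series with nonnegative coefficients. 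It is therefore nondecreasing in each coordinate of $[0,1]^d$, and for a componentwise nondecreasing sequence $\bm{s}_n\nearrow\bm{s}$ the monomials $\bm{s}_n^{\bm{k}}$ increase to $\bm{s}^{\bm{k}}$. The monotone convergence theorem (for the counting measure weighted by $\mu^{(i)}_x$) then gives $\varphi_i(x,\bm{s}_n)\nearrow\varphi_i(x,\bm{s})$. Alternatively, since all the series are dominated by $\sum_{\bm{k}}\mu^{(i)}_x(\bm{k})=1$, dominated convergence works as well, and it gives continuity of $\varphi_i(x,\cdot)$ on the whole closed cube $[0,1]^d$. Combining the two limits yields $\bm{q}(x)=\bm{\varphi}(x,\bm{q}(Tx))$.

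The only delicate point is this continuity at the boundary of $[0,1]^d$, where $\bm{q}(Tx)$ may have coordinates equal to $1$. The monotone convergence argument handles this with no extra assumptions. Hypothesis (H\ref{hmult}) is not really needed for the identity itself. It only ensures that the objects are well defined and nondegenerate, so I expect no genuine obstacle beyond carefully checking the identification $\bm{\varphi}^{(1)}=\bm{\varphi}$ and the monotone limit interchange.
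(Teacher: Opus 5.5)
Your proof is correct and follows essentially the same route as the paper. You apply Proposition~\ref{mulprop5} with $k=1$ and $\bm{s}=\bm{0}$, then pass to the limit using Proposition~\ref{mulpropq} at $x$ and $Tx$ together with continuity of $\bm{\varphi}(x,\cdot)$ on $[0,1]^d$. The only difference is that the paper cites Corollary~\ref{mulcor5} for this continuity, whereas your monotone/dominated convergence argument is self-contained and, as you note, does not actually need (H\ref{hmult}).
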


\begin{example}\label{ex_multitype2} 
We modify Example~\ref{ex_multitype} by considering a different transformation for the reproduction of each type of individual.

\begin{figure}[!ht]
\center
\includegraphics[scale=0.7]{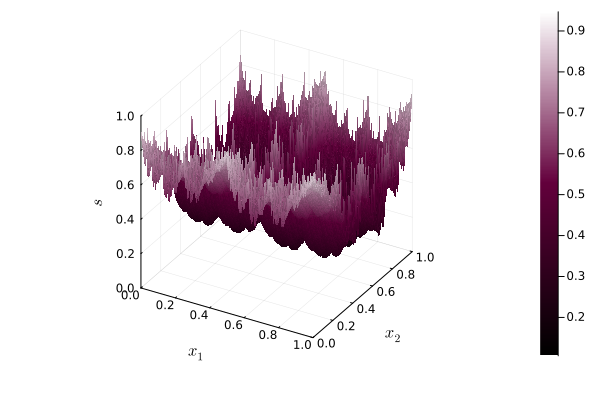}
\caption{Plot of the probability of extinction $q_{1,\lambda}$ (with the initial population $\bm{e}_1$) in the case of Example~\ref{ex_multitype2} with $\lambda=-0.4$.}
\end{figure}

\begin{itemize}
    \item $\mathbb{X} \defeq \RZ^2$,
    \item $T \defeq \left\{
    \begin{array}{lcl}
        \RZ^2 & \to & \RZ^2 \\
        (x_1, x_2) & \mapsto & (2x_1 \text{ modulo } 1, 2x_2 \text{ modulo } 1)
    \end{array}\right.,$
    \item For all $\lambda \in \mathbb{R}$ and $x = (x_1, x_2) \in \RZ^2$, the reproduction law $\bm{\mu}_{\lambda, x} = (\mu_{\lambda, x}^{(1)}, \mu_{\lambda, x}^{(2)})$ is defined as follows:
    \begin{itemize}
        \item $\mu_{\lambda, x}^{(1)} \sim \left( \text{Pois}\left(e^{\lambda - \cos(2\pi x_1)}\right), \text{Pois}\left(e^{\lambda - \sin(2\pi x_1)}\right) \right)$,
        \item $\mu_{\lambda, x}^{(2)} \sim \left( \text{Pois}\left(e^{\lambda + \sin(2\pi x_2)}\right), \text{Pois}\left(e^{\lambda - \cos(2\pi x_2)}\right) \right)$,
    \end{itemize}
    where the components of each couple are independent.
\end{itemize}

Once again, for each $\lambda \in \mathbb{R}$, this defines a multi-type Galton-Watson process in dynamical environments satisfying Hypotheses~(H\ref{hmult}). Contrary to Example~\ref{ex_multitype}, if $\RZ^2$ is equipped with a product measure, then the two populations have independent environments.

\end{example}

The expected number of offspring remains the key parameter in determining the probability of extinction.

\begin{definition}
For all $x\in\mathbb{X}$, let $M(x) = (m_{ij}(x))_{1 \leq i,j \leq d}$ be the mean matrix defined by:
\begin{align*}
M(x)\defeq \left(\frac{\partial \varphi_j}{\partial s_i}(x, \bm{1})\right)_{i,j\in\llbracket 1,d\rrbracket}.
\end{align*}
For all $x\in\mathbb{X}$, $m_{ij}(x) = \frac{\partial \varphi_j}{\partial s_i}(x, \bm{1})$ is the expected number of offspring of type $i$ produced by an individual of type $j$.
\end{definition}

\begin{example}[Example~\ref{ex_multitype}]
Let $\lambda\in\mathbb{R}$. The mean matrix $M_\lambda(x)$ is given by:
\begin{equation*}
M_\lambda(x) = \begin{pmatrix} 
e^{\lambda - \cos(2\pi x)} & e^{\lambda - \sin(2\pi x)} \\
e^{\lambda + \sin(2\pi x)} & e^{\lambda - \cos(2\pi x)}
\end{pmatrix}.
\end{equation*}
\end{example}

Finally, we will examine the environments that lead to almost certain extinction.

\begin{definition}
The set of bad environments is defined by:
\begin{align*}
    N\defeq\{x\in\mathbb{X}:\bm{q}(x)=\bm{1}\}.
\end{align*}
\end{definition}

\section{Results}\label{sect3}

In this section, we present the main theorems of this article. Theorem~\ref{extmult} provides a criterion to determine the regime of a Galton-Watson process. Theorem~\ref{thmcontinuite} shows, under some hypotheses, that the continuity of $x\in\mathbb{X}\mapsto\mu_x$ is indeed preserved by $x\in\mathbb{X}\mapsto q(x)$ in the uniformly supercritical case.

\subsection{Measure of the set of bad environments}

We can classify the multi-type Galton-Watson processes in dynamical environments according to the following definition (which coincides with the single-type case \cite{M1}). 
\begin{definition}\label{defclass}
A Galton-Watson process in dynamical environments is said to be: 
\begin{itemize} 
\item \textbf{uniformly subcritical} if the set $N$ has measure one for all ergodic probability measures, 
\item \textbf{critical} if the set $N$ has measure zero for at least one ergodic probability measure and measure one for another, 
\item \textbf{uniformly supercritical} if the set $N$ has measure zero for all ergodic probability measures. 
\end{itemize}
\end{definition} 
 For multi-type Galton–Watson processes in dynamical environments, if we fix an ergodic measure $\nu \in \mathcal{E}_T(\mathbb{X})$ and choose the initial environment $x \in \mathbb{X}$ according to the law $\nu$, then the process $(\bm{\mu}_{T^n x})_{n\in\mathbb{N}}$ is stationary and ergodic, so we can apply the results of Athreya and Karlin~\cite[Theorem 8]{MR0298780} and Kaplan~\cite[Theorem 2]{10.1214/aop/1176996659}.

\encad{
\begin{hyp}[H\ref{hmult2}]\label{hmult2}~
\begin{enumerate}[label=\alph*)]
\item $x\in\mathbb{X}\mapsto \bm{\mu}_x$ is continuous.
\item $0 \leq \mu^{(i)}_{x}(\bm{0}) + \sum\limits_{j=1}^d \mu^{(i)}_{x}(\bm{e}_j) < 1$ for all $x\in\mathbb{X}$.\label{Hb}
\item There exist constants $0 < C \leq D < +\infty$ such that:
\begin{equation*}
    C \leq \min_{i,j} m_{ij} \leq \max_{i,j} m_{ij} \leq D \quad \text{and}\quad\max_{i,j,k} \frac{\partial^2 \varphi_{k}}{\partial s_i \partial s_j}(\cdot,\bm{1}) \leq D .
    \end{equation*}\label{Hc}
\end{enumerate}
\end{hyp}}\medskip

Hypothesis~(H\ref{hmult2}) implies Hypothesis~(H\ref{hmult}).

\begin{theorem}\label{extmult}
    Assume (H\ref{hmult2}). Let $\nu\in\mathcal{E}_T(\mathbb{X})$. Then,
    \begin{align*}
        \nu(N)=0 \qquad\text{if and only if}\qquad \pi_\nu>0,
    \end{align*} 
    where
    \begin{align*}
        \pi_\nu \coloneqq \lim_{n \to \infty} \frac{1}{n} \mathbb{E}_\nu[\ln \|M (T^{n-1}\cdot)\dots M(\cdot)\|].
    \end{align*}
    (The Lyapunov exponent $\pi_\nu$ exist and is finite by \cite{MR0121828}).
\end{theorem}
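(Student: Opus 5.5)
The plan is to reduce Theorem~\ref{extmult} to Theorem~\ref{multext} by viewing the dynamical environment, with the initial point drawn according to $\nu$, as a stationary ergodic random environment.

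\textbf{Step 1: building the random environment.} Fix $\nu\in\mathcal{E}_T(\mathbb{X})$ and regard $x$ as a random variable with law $\nu$. Set $\xi_n\defeq T^n x$, or equivalently take the environment sequence to be $(\bm{\mu}_{T^n x})_{n\in\mathbb{N}}$. Since $\nu$ is $T$-invariant and ergodic, this sequence is stationary and ergodic; it is a factor of the ergodic system $(\mathbb{X},T,\nu)$ through a measurable map. Conditionally on $x$, the process $(\bm{Z}_n(x))_{n\in\mathbb{N}}$ is exactly the multi-type Galton-Watson process in the varying environment $(\bm{\mu}_{T^nx})_{n\in\mathbb{N}}$. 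Its quenched extinction probability is therefore $\bm{q}(x)$, and the process is a multi-type Galton-Watson process in a stationary ergodic random environment.

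\textbf{Step 2: checking the hypotheses.} Hypotheses (H\ref{hmult2})\ref{Hb} and \ref{Hc} hold for every $x\in\mathbb{X}$, so they hold $\nu$-almost surely for $\xi_0$. This gives \eqref{eqmult1} and \eqref{eqmult2}. The mean matrices are $M(T^n x)$, so the exponent $\pi$ of Theorem~\ref{multext} coincides with $\pi_\nu$. The bounds $C\le m_{ij}\le D$ give $\ln\|M\|\in[\ln(dC),\ln(dD)]$, which is integrable. Hence $\pi_\nu$ exists and is finite by Kingman / Furstenberg--Kesten \cite{MR0121828}.

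\textbf{Step 3: applying Theorem~\ref{multext}.} That theorem gives two cases.
\begin{itemize}
\item If $\pi_\nu\le 0$, then $\bm{q}(x)=\bm{1}$ for $\nu$-a.e.\ $x$, so $\nu(N)=1\neq 0$.
\item If $\pi_\nu>0$, then $\bm{q}(x)<\bm{1}$ for $\nu$-a.e.\ $x$, so $\nu(N)=0$.
\end{itemize}
Since these two cases are exhaustive, the equivalence follows. The dichotomy $\nu(N)\in\{0,1\}$ is consistent with the $T$-invariance of $N$, which follows from Proposition~\ref{mulprop1} together with (H\ref{hmult2})\ref{Hc}. Indeed, if $\bm{q}(Tx)=\bm{1}$ then $\bm{q}(x)=\bm{\varphi}(x,\bm{1})=\bm{1}$. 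Conversely, positivity of the mean matrix forces $\varphi_i(x,\cdot)$ to be strictly increasing in each coordinate, so $\bm{q}(x)=\bm{1}$ implies $\bm{q}(Tx)=\bm{1}$.

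\textbf{Main obstacle.} The delicate point is the transfer between the two settings. The cited theorems state ``almost sure'' conclusions about the quenched extinction probability in random environment, and one must make sure this quenched probability is exactly $\bm{q}(x)$ for the realized environment. This requires measurability of $x\mapsto\bm{q}(x)$, which holds because $\bm{q}$ is an increasing limit of the continuous functions $\bm{\varphi}^{(n)}(\cdot,\bm{0})$ by Proposition~\ref{mulpropq}. One must also check that the environment-indexing convention $M(x)$ used here (transposed relative to Section~\ref{sect1}) does not change $\pi_\nu$. I expect this to be fine because the norms of a matrix product and of its transpose have the same exponential growth rate.
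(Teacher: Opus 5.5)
Your reduction is the paper's own argument. The paper proves Theorem~\ref{extmult} only by remarking that, for $x$ drawn according to $\nu$, the environment sequence $(\bm{\mu}_{T^n x})_{n\in\mathbb{N}}$ is stationary and ergodic, and then invoking Athreya--Karlin and Kaplan. Your additional checks are fine: the hypotheses hold everywhere, $\ln\|M\|$ is integrable, $\bm q$ is measurable, and $N$ is $T$-invariant.

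The one step that fails as you justify it is the convention-matching in your last paragraph. Transposing each factor reverses the order of the product:
\[
M(T^{n-1}x)^{\top}\cdots M(x)^{\top}=\bigl(M(x)\,M(Tx)\cdots M(T^{n-1}x)\bigr)^{\top}.
\]
Norm equivalence between a matrix and its transpose therefore only shows that the factor-wise transposed cocycle grows like the \emph{reversed} product $M(x)\cdots M(T^{n-1}x)$, not like $M(T^{n-1}x)\cdots M(x)$. For a non-reversible environment these two rates can differ, even for positive matrices. Take the period-$3$ orbit $\{1/7,2/7,4/7\}$ of the doubling map, with mean matrices (in the convention of Section~\ref{sect2}) $cA,cB,cC$, where
\[
A=\begin{pmatrix}2&1\\1&1\end{pmatrix},\quad B=\begin{pmatrix}1&2\\1&1\end{pmatrix},\quad C=\begin{pmatrix}1&1\\2&1\end{pmatrix}.
\]
Then $\pi_\nu=\log c+\tfrac13\log\rho(CBA)$ with $\rho(CBA)=\rho(ACB)=\tfrac{15+\sqrt{221}}{2}$. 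The reversed order instead gives $\log c+\tfrac13\log\rho(ABC)$ with $\rho(ABC)=9+\sqrt{80}$. For suitable $c>0$ the two exponents have opposite signs.

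In particular, take the formula in Theorem~\ref{multext} literally, with the row convention $m_{ij}=\partial\varphi_i/\partial s_j$ and the product $M(\xi_{n-1})\cdots M(\xi_0)$. Rewritten with the matrices of Section~\ref{sect2}, it becomes this reversed product, not $\pi_\nu$. So your claim that the two exponents ``coincide'' is not justified by the transpose argument.

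The correct matching is done on the whole $n$-step mean matrix, not factor by factor. What governs extinction is the growth of $\bigl(\mathbb{E}[Z_n^{(j)}\mid \bm Z_0=\bm e_i]\bigr)_{i,j}$. On a periodic orbit this is forced by the classical criterion applied to the process observed once per period. In the convention of Section~\ref{sect2} this matrix is exactly $\bigl(M(T^{n-1}x)\cdots M(x)\bigr)^{\top}$, and its norm is within a factor $d$ of $\|M(T^{n-1}x)\cdots M(x)\|$. With that replacement your proof is complete. The paper's one-line reduction does not address this point either.
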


\begin{remark}In the context of Theorem~\ref{extmult}, for $\nu$-almost all $x\in\mathbb{X}$, \begin{align*}
    \pi_\nu = \lim\limits_{n \to \infty} \frac{1}{n} \ln \|M (T^{n-1}x)\dots M(x)\|.
\end{align*}
\end{remark}

\begin{example}In the case of Examples~\ref{ex_multitype} or~\ref{ex_multitype2}, Hypotheses~(H\ref{hmult2}) is satisfied for all $\lambda \in \mathbb{R}$.
\end{example}

\subsection{Continuity in the supercritical case}

In the uniformly supercritical case, the following results give the continuity of the application $x\in\mathbb{X}\mapsto q(x)$ under some conditions. 

\begin{theorem}\label{thmcontinuite}
    Assume (H\ref{hmult2}) and that $\pi_\nu>0$ for all $\nu\in\mathcal{E}_T(\mathbb{X})$. Then $x\in\mathbb{X}\mapsto \bm{q}(x)$ is continuous.
\end{theorem}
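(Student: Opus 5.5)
The strategy is to write $\bm{q}$ as both an increasing and a decreasing limit of continuous functions, so that it is lower and upper semi-continuous at once. For the lower side, Proposition~\ref{mulpropq} gives $\bm{q}=\lim\nearrow\bm{\varphi}^{(n)}(\cdot,\bm{0})$. Each $x\mapsto\bm{\varphi}^{(n)}(x,\bm{0})$ is continuous: by (H\ref{hmult2}) a) and Proposition~\ref{mulprop5} it is a finite composition of the maps $(x,\bm{s})\mapsto\bm{\varphi}(T^kx,\bm{s})$, which are jointly continuous on $\mathbb{X}\times[0,1]^d$ by weak continuity of $\bm{\mu}$ and continuity of $T$. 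Hence $\bm{q}$ is lower semi-continuous, and the whole difficulty is upper semi-continuity.

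For the upper side, fix $\bm{s}_0\in[0,1)^d$ close to $\bm{1}$ and use $\bm{\varphi}^{(n)}(x,\bm{s}_0)$, which is continuous in $x$ and satisfies $\bm{\varphi}^{(n)}(x,\bm{s}_0)\ge\bm{\varphi}^{(n)}(x,\bm{0})$. It remains to show $\bm{\varphi}^{(n)}(x,\bm{s}_0)\to\bm{q}(x)$ for every $x$, possibly only along a subsequence or after an infimum over $n$ that still gives a decreasing family. We could equivalently take $\inf_{k\le n}$, since an infimum of continuous functions is upper semi-continuous. Since $\bm{\varphi}^{(n)}(x,\bm{s}_0)-\bm{\varphi}^{(n)}(x,\bm{0})$ is at most the probability that $\bm{Z}_n(x)\neq\bm{0}$ and $\bm{Z}_n(x)$ does not explode, the task is to show that non-extinct populations grow to infinity for \emph{every} $x$, not just $\nu$-a.e. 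This is where the results of \cite[Theorems 1 and 2]{MR331477} on limits of compositions of multi-type generating functions come in. Their hypotheses (uniform positivity of the means, bounded second moments, non-singularity, all given by (H\ref{hmult2})) yield that $\bm{\varphi}^{(n)}(x,\bm{s})$ has the same limit for every $\bm{s}\in[0,1)^d$, provided the product $M(T^{n-1}x)\cdots M(x)$ grows to infinity along the orbit.

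The key step, and the main obstacle, is that growth condition for every $x$. I would set $f_n(x)=\ln\|M(T^{n-1}x)\cdots M(x)\|$. This is a continuous subadditive sequence by submultiplicativity of the norm. By hypothesis, $\lim f_n/n=\pi_\nu>0$ for every ergodic $\nu$. The semi-uniform subadditive ergodic theorem \cite[Corollary~1.11]{MR1734626} then gives $\gamma>0$ and $n_0$ such that $f_n(x)\ge \gamma n$ for all $x\in\mathbb{X}$ and $n\ge n_0$. The strict inequality is applied to $-f_n$: since $\sup_\nu\lim(-f_n)/n<0$, we get $\limsup(-f_n)/n<0$ uniformly.

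To pass from the norm to all entries, I use (H\ref{hmult2}) c). Since every entry of each $M$ lies in $[C,D]$, any two entries of the product $M(T^{n-1}x)\cdots M(x)$ have ratio bounded by a constant $K$ depending only on $C,D,d$. Thus every entry is at least $K^{-1}e^{\gamma n}$, uniformly in $x$. This is exactly the divergence condition needed to invoke \cite{MR331477} at each $x$, giving $\lim_n\bm{\varphi}^{(n)}(x,\bm{s}_0)=\lim_n\bm{\varphi}^{(n)}(x,\bm{0})=\bm{q}(x)$. Then $\bm{q}=\inf_n\bm{\varphi}^{(n)}(\cdot,\bm{s}_0)$ is upper semi-continuous, and combined with the lower bound, $\bm{q}$ is continuous.

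The delicate points to verify are the exact hypotheses of \cite{MR331477}: whether they need the limit independent of $\bm{s}$ only for $\bm{s}<\bm{1}$, and whether a uniform lower bound on the mean products suffices. Care is also needed that the iterates are of the form required, namely inner compositions $\bm{\varphi}(x,\bm{\varphi}(Tx,\cdots))$, as given by Proposition~\ref{mulprop5}.
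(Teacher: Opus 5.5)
Your overall architecture matches the paper's: lower semicontinuity from $\bm{\varphi}^{(n)}(\cdot,\bm{0})$, uniform growth via \cite[Corollary~1.11]{MR1734626} plus bounded ratios, and Kaplan for the limit. However, the upper semicontinuity step has a genuine gap. Pointwise convergence $\bm{\varphi}^{(n)}(x,\bm{s}_0)\to\bm{q}(x)$ does not by itself make $\bm{q}$ upper semicontinuous. Your identity $\bm{q}=\inf_n\bm{\varphi}^{(n)}(\cdot,\bm{s}_0)$ requires the domination $\bm{\varphi}^{(n)}(x,\bm{s}_0)\ge\bm{q}(x)$ for all $n$ and $x$. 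The inequality $\bm{\varphi}^{(n)}(x,\bm{s}_0)\ge\bm{\varphi}^{(n)}(x,\bm{0})$ that you note does not give this, since $\bm{\varphi}^{(n)}(x,\bm{0})\le\bm{q}(x)$.

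The term $n=0$, namely $\bm{\varphi}^{(0)}(x,\bm{s}_0)=\bm{s}_0$, shows that your identity forces $\bm{q}\le\bm{s}_0$ on all of $\mathbb{X}$. Conversely, $\bm{q}(x)=\bm{\varphi}^{(n)}(x,\bm{q}(T^nx))$ (Propositions~\ref{mulprop5} and~\ref{mulprop1}) and $\bm{\varphi}^{(n)}(x,\cdot)$ is non-decreasing, so domination holds as soon as $\bm{s}_0\ge\bm{q}(y)$ for every $y$. So you need a \emph{uniform} bound $\sup_{y\in\mathbb{X}}\bm{q}(y)<\bm{1}$, and choosing $\bm{s}_0$ ``close to $\bm{1}$'' does not supply it. Neither lower semicontinuity of $\bm{q}$ nor $\bm{q}(x)<\bm{1}$ at each $x$ implies it. Replacing the sequence by $\inf_{k\le n}$ does not help either, since that converges to $\inf_k$, not to $\lim_k$.

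The missing ingredient is the paper's Lemma~\ref{lemmadecr}. Combine the uniform entrywise growth $(M^{(N)}(x))_{i,j}\ge e^{N\delta}>1$ with a second-order expansion of $\bm{\varphi}^{(N)}(x,\cdot)$ at $\bm{1}$. Its remainder is $O(\varepsilon^2)$ uniformly in $x$ thanks to the second-moment bound in (H\ref{hmult2})\ref{Hc}. This gives $\bm{K}=\bm{1}-\varepsilon\bm{1}$ with $\bm{\varphi}^{(N)}(x,\bm{K})\le\bm{K}$ for every $x$. Then $(\bm{\varphi}^{(nN)}(\cdot,\bm{K}))_n$ is a non-increasing sequence of continuous functions lying above $\bm{q}$, so in particular $\bm{q}\le\bm{K}$ everywhere. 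Once Kaplan's results identify its limit as $\bm{q}$, upper semicontinuity follows. With $\bm{s}_0=\bm{K}$, your infimum argument would also go through.

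There is a second, smaller error in the growth step. \cite[Corollary~1.11]{MR1734626} gives uniform \emph{upper} bounds for \emph{subadditive} sequences. Your $f_n=\ln\|M^{(n)}\|$ is subadditive, so $-f_n$ is superadditive and the corollary says nothing about it. This is not a formality: for general continuous matrix cocycles, a positive exponent for every ergodic measure does not imply uniform exponential growth of $\|M^{(n)}(x)\|$. Non-uniformly hyperbolic $SL(2,\mathbb{R})$ cocycles with positive exponent over a uniquely ergodic base are counterexamples.

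The bounded-ratio structure must therefore be used at this point, not only afterwards to pass from the norm to the entries. With $c=(D/C)^2$, Lemmas~\ref{lemmaproduct} and~\ref{lemmainf} give $\|M^{(n+m)}(x)\|\ge c^{-1}\|M^{(m)}(T^nx)\|\,\|M^{(n)}(x)\|$. Hence $-f_n+\ln c$ is subadditive, and applying the corollary to that sequence (the paper's $\psi_n$) yields the uniform bound $f_n\ge\gamma n$ you want.
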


\begin{corollary}\label{multcor6}
    Assume (H\ref{hmult2}). For each $a\in[0,1)^d$, the sequence of functions $(\bm{\varphi}^{(n)}(\cdot,a))_{n\in\mathbb{N}}$ converges uniformly to $\bm{q}$.
\end{corollary}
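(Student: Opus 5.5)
The plan is to sandwich $\bm{\varphi}^{(n)}(\cdot,a)$ between two sequences that both converge uniformly to $\bm{q}$, working componentwise. Fix $a\in[0,1)^d$ and write $\alpha\defeq\max_j a_j<1$. By monotonicity of generating functions, $\bm{\varphi}^{(n)}(x,\bm{0})\le \bm{\varphi}^{(n)}(x,a)$. Moreover, for each type $i$,
\begin{align*}
\varphi^{(n)}_i(x,a)-\varphi^{(n)}_i(x,\bm{0})=\mathbb{E}\big[a^{\bm{Z}_n(x)}\mathds{1}_{\{\bm{Z}_n(x)\neq\bm{0}\}}\mid \bm{Z}_0=\bm{e}_i\big]\le \mathbb{P}\big(0<|\bm{Z}_n(x)|\le K\mid \bm{Z}_0=\bm{e}_i\big)+\alpha^K
\end{align*}
for every $K\in\mathbb{N}$. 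It therefore suffices to prove two things:
\begin{itemize}
\item[(A)] $\bm{\varphi}^{(n)}(\cdot,\bm{0})\to\bm{q}$ uniformly on $\mathbb{X}$;
\item[(B)] for each fixed $K$, $\sup_{x}\mathbb{P}(0<|\bm{Z}_n(x)|\le K\mid \bm{Z}_0=\bm{e}_i)\to 0$ as $n\to\infty$.
\end{itemize}

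For (A), each $\bm{\varphi}^{(n)}(\cdot,\bm{0})$ is continuous on $\mathbb{X}$. This follows from (H\ref{hmult2}a) together with Proposition~\ref{mulprop5}, by induction using continuity of $T$. By Proposition~\ref{mulpropq} the sequence increases pointwise to $\bm{q}$, so on the compact space $\mathbb{X}$, Dini's theorem gives uniform convergence as soon as $\bm{q}$ is continuous. I would split into the regimes of Definition~\ref{defclass}.
\begin{itemize}
\item \emph{Uniformly supercritical.} Theorem~\ref{extmult} gives $\pi_\nu>0$ for every $\nu\in\mathcal{E}_T(\mathbb{X})$, and Theorem~\ref{thmcontinuite} gives continuity of $\bm{q}$.
\item \emph{Uniformly subcritical.} I would show $\bm{q}\equiv\bm{1}$ everywhere, not just almost everywhere. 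The idea is to use the semi-uniform subadditive ergodic theorem (\cite[Corollary~1.11]{MR1734626}) for the continuous subadditive sequence $\ln\|M(T^{n-1}x)\cdots M(x)\|$. When $\pi_\nu<0$ for all ergodic $\nu$, this gives $\|M(T^{n-1}x)\cdots M(x)\|\le e^{-cn}$ uniformly in $x$. Hence $1-\varphi^{(n)}_i(x,\bm{0})\le \mathbb{E}|\bm{Z}_n(x)|\to0$ uniformly, which yields (A) directly. The borderline case $\pi_\nu=0$ would need the second-moment bound in (H\ref{hmult2}c), as in Athreya--Karlin and Kaplan.
\item \emph{Critical.} This is the case I expect to be the main obstacle, since no earlier result gives continuity of $\bm{q}$ there. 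I would first try the Kesten-type comparison of \cite[Theorems 1 and 2]{MR331477}, using the uniform ratio bounds on the entries of the products $M(T^{n-1}x)\cdots M(x)$ that follow from (H\ref{hmult2}c).
\end{itemize}

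For (B), I would use the nonsingularity condition (H\ref{hmult2}b) together with compactness and continuity of $\bm{\mu}$. These give some $\delta>0$ such that every individual, in every environment $x$, has at least two children with probability at least $\delta$. Hence from any state with $1\le|\bm z|\le K$ the population exceeds $K$ within $K$ generations with probability at least $\delta^{K^2}$, uniformly in the environment. A transience argument for the time-inhomogeneous chain $(\bm{Z}_n(x))$ on the finite set $\{1\le|\bm z|\le K\}$ should then bound the probability of still being in that set at time $n$ geometrically, uniformly in $x$. The delicate point is controlling returns from above $K$. I would handle this with the uniform lower bound $C$ on the means, which makes the probability of falling back small once the population is large.

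Letting $n\to\infty$ and then $K\to\infty$ in the displayed inequality yields the corollary.
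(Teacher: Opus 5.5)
There is a genuine gap, and it starts with your case split. As printed, the corollary assumes only (H\ref{hmult2}). The paper's proof, however, uses Lemma~\ref{lemmadecr} and Theorem~\ref{thmcontinuite}, and both require $\pi_\nu>0$ for every $\nu\in\mathcal{E}_T(\mathbb{X})$. The statement should be read under that hypothesis; outside it the conclusion is false, so your critical branch is not an obstacle but a counterexample.

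Take the doubling map on $\mathbb{R}/\mathbb{Z}$ and let every individual in environment $x$ have independent Poisson$(m(x)/d)$ children of each type. Choose $m$ positive and Lipschitz, with $m(0)<1$ and $\int\log m\,d\mathrm{Leb}>0$. Then (H\ref{hmult2}) holds and $\|M^{(n)}(x)\|=\prod_{k<n}m(T^kx)$. So $0\in N$, while $\mathrm{Leb}(N)=0$ by Theorem~\ref{extmult}. Since $N=T^{-1}N$ (Proposition~\ref{mulcor2}), $N$ contains every dyadic rational, and these are dense. Hence $\bm q$ is discontinuous at every $x\notin N$, and the continuous functions $\bm\varphi^{(n)}(\cdot,a)$ (Corollary~\ref{mulcor5}) cannot converge uniformly to it; your (A) fails there.

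Your claim that $\bm q\equiv\bm 1$ in the uniformly subcritical case is also wrong when $\max_\nu\pi_\nu=0$. An orbit that approaches a neutral fixed point slowly enough that $\|M^{(n)}(x)\|\approx e^{\sqrt n}$ survives with positive probability, even though every ergodic measure gives $N$ full mass.

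In the supercritical case your (A) is exactly the paper's argument (Theorem~\ref{thmcontinuite} plus Dini). But (B) is not a reduction. For $s\in(0,1)$ one has $s^K\,\mathbb{P}_i(0<|\bm Z_n(x)|\le K)\le\varphi^{(n)}_i(x,s\bm 1)-\varphi^{(n)}_i(x,\bm 0)$, so given (A), (B) is equivalent to the corollary itself, and you only sketch it.

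The mechanism you propose for returns from above $K$, the lower bound $C$ on the means, cannot suffice, because it holds just as well in the example above, where returns do happen. Take a dyadic point whose orbit follows a Lebesgue-typical one for $k$ steps and then sits at $0$. The population grows to order $e^{ck}$ for some $c>0$, and then decays back into $[1,K]$ at a time of order $k$ with probability bounded below.

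The missing ingredient is uniform supercriticality, Corollary~\ref{corobornemoy}: $(M^{(n)}(x))_{i,j}\ge e^{n\delta}$ for all $x$ and $n\ge N$. The paper converts it into Lemma~\ref{lemmadecr}: $\bm\varphi^{(N)}(x,\bm K)\le\bm K$ for all $x$, with $\bm K=(1-\varepsilon)\bm 1$ for every small enough $\varepsilon$, so one may take $\bm K\ge a$. The rest of the paper's proof then runs as follows.
\begin{itemize}
\item $(\bm\varphi^{(nN)}(\cdot,\bm K))_n$ is a non-increasing sequence of continuous functions converging to $\bm q$ (Lemma~\ref{lemmalast}), hence uniformly by Dini.
\item Uniform continuity of $\Phi:f\mapsto\bm\varphi(\cdot,f\circ T)$, together with $\Phi(\bm q)=\bm q$ (Proposition~\ref{mulprop1}), passes from the times $nN$ to all $n$.
\item Monotonicity in $\bm s$ squeezes $\bm\varphi^{(n)}(\cdot,a)$ between $\bm\varphi^{(n)}(\cdot,\bm 0)$ and $\bm\varphi^{(n)}(\cdot,\bm K)$.
\end{itemize}
Your transience route might be completed by feeding Corollary~\ref{corobornemoy} into a second-moment concentration estimate over blocks of length $N$. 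As written, that step is missing.
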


\section{Proofs}\label{sect4}

In this section, we provide the proofs of the results presented in this article.

\subsection{Elementary results}

In this subsection, we prove some elementary results on multi-type Galton-Watson processes in dynamical environments. We will start with some results on probability generating functions.

\begin{proof}[Proof of Proposition~\ref{mulprop5}]
We first show by induction that for all $x \in \mathbb{X}$, $n \in \mathbb{N}$, and $\bm{s} \in [0,1]^d$:
\begin{align}\label{muleq_induc}
    \bm{\varphi}^{(n+1)}(x, \bm{s}) = \bm{\varphi}^{(n)}(x, \bm{\varphi}(T^n x, \bm{s})).
\end{align}
Let $x \in \mathbb{X}$, $n \in \mathbb{N}$, and $\bm{s} \in [0,1]^d$. Using that $\{\bm{Y}_{n,k}^{(i)}, k \in \mathbb{N}\}$ are i.i.d.\ random vectors and are independent of $\bm{Z}_n(x)$,
\begin{align*}
    \bm{\varphi}^{(n+1)}(x, \bm{s}) 
    &= \mathbb{E}\left[\bm{s}^{Z_{n+1}(x)}\right] \\
    &= \mathbb{E}\left[\bm{s}^{\sum\limits_{i=1}^d \sum\limits_{k=1}^{Z_{n,i}(x)} \bm{Y}_{n,k}^{(i)}}\right] \\
    &= \mathbb{E}\left[ \mathbb{E} \left[ \prod_{i=1}^d \prod_{k=1}^{Z_{n,i}(x)} \bm{s}^{\bm{Y}_{n,k}^{(i)}} \Bigg| \bm{Z}_n(x) \right] \right] \\
    &= \mathbb{E}\left[ \prod_{i=1}^d \mathbb{E} \left[ \bm{s}^{\bm{Y}_{n,1}^{(i)}} \right]^{Z_{n,i}(x)} \right] \\
    &= \mathbb{E}\left[ \prod_{i=1}^d \left( \varphi_i(T^n x, \bm{s}) \right)^{Z_{n,i}(x)} \right] \\
    &= \mathbb{E}\left[ \bm{\varphi}(T^n x, \bm{s})^{\bm{Z}_n(x)} \right] \\
    &= \bm{\varphi}^{(n)}(x, \bm{\varphi}(T^n x, \bm{s})).
\end{align*} The general case follows by induction on $k\in\mathbb{N}$.
\end{proof}

\begin{corollary}\label{mulcor5}
    Under Hypothesis~(H\ref{hmult}), for all $n\in\mathbb{N}$, $\bm{\varphi}^{(n)}$ is continuous on $\mathbb{X}\times[0,1]^d$.
\end{corollary}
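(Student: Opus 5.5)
We argue by induction on $n$, using the recursion \eqref{muleq_induc} from the proof of Proposition~\ref{mulprop5}, namely $\bm{\varphi}^{(n+1)}(x,\bm{s})=\bm{\varphi}^{(n)}(x,\bm{\varphi}(T^nx,\bm{s}))$. Equivalently, we use the form with $k=1$: $\bm{\varphi}^{(n+1)}(x,\bm{s})=\bm{\varphi}(x,\bm{\varphi}^{(n)}(Tx,\bm{s}))$. Both follow from Proposition~\ref{mulprop5}.

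\textbf{Base cases.} For $n=0$ we have $\bm{\varphi}^{(0)}(x,\bm{s})=\bm{s}$, which is trivially continuous. For $n=1$, $\bm{\varphi}^{(1)}=\bm{\varphi}$, so the core of the proof is that $(x,\bm{s})\mapsto\bm{\varphi}(x,\bm{s})$ is jointly continuous on $\mathbb{X}\times[0,1]^d$.

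\textbf{Joint continuity of $\bm{\varphi}$.} Fix $i$ and let $(x_m,\bm{s}_m)\to(x,\bm{s})$. Split
\[
|\varphi_i(x_m,\bm{s}_m)-\varphi_i(x,\bm{s})|\le |\varphi_i(x_m,\bm{s}_m)-\varphi_i(x,\bm{s}_m)|+|\varphi_i(x,\bm{s}_m)-\varphi_i(x,\bm{s})|.
\]
The second term tends to $0$ because, for fixed $x$, $\bm{s}\mapsto\sum_{\bm{k}}\mu^{(i)}_x(\bm{k})\bm{s}^{\bm{k}}$ is continuous on $[0,1]^d$. Indeed, the series converges uniformly there by the Weierstrass M-test, with $\sum_{\bm{k}}\mu_x^{(i)}(\bm{k})=1$. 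For the first term, since $|\bm{s}^{\bm{k}}|\le 1$, it is bounded by $\sum_{\bm{k}}|\mu^{(i)}_{x_m}(\bm{k})-\mu^{(i)}_x(\bm{k})|$, which is twice the total variation distance.

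This is the step that depends on how continuity of $x\mapsto\bm{\mu}_x$ is understood; I will take weak convergence. Measures on the countable discrete set $\mathbb{N}^d$ converge weakly if and only if they converge pointwise on each atom. By Scheffé's lemma, pointwise convergence of probability mass functions implies convergence in total variation. Hence the bound tends to $0$, uniformly in $\bm{s}$.

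This total-variation bound is the only genuinely delicate point. Without it one has only pointwise continuity of $\mu_x(\bm{k})$ in $x$, and interchanging the limit with the infinite sum would need the tightness argument that Scheffé provides.

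\textbf{Inductive step.} Assume $\bm{\varphi}^{(n)}$ is jointly continuous. The map $(x,\bm{s})\mapsto(Tx,\bm{s})$ is continuous since $T$ is continuous. Composing with $\bm{\varphi}^{(n)}$ shows that $(x,\bm{s})\mapsto\bm{\varphi}^{(n)}(Tx,\bm{s})$ is continuous with values in $[0,1]^d$. Then $(x,\bm{s})\mapsto(x,\bm{\varphi}^{(n)}(Tx,\bm{s}))$ is continuous into $\mathbb{X}\times[0,1]^d$. Composing once more with the jointly continuous $\bm{\varphi}$ gives continuity of $\bm{\varphi}^{(n+1)}$, which closes the induction.

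Hypothesis (H\ref{hmult})\ref{1b} plays no role here; only \ref{1a} and the continuity of $T$ are used.
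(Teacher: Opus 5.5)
Your proof is correct and is essentially the paper's. It uses the same triangle-inequality split, bounds the change-of-environment term uniformly in $\bm{s}$ by $\sum_{\bm{k}}|\mu^{(i)}_{x}(\bm{k})-\mu^{(i)}_{y}(\bm{k})|$, and then inducts via Proposition~\ref{mulprop5}. Your Scheffé argument adds one thing: it makes explicit the passage from (weak) continuity of $x\mapsto\bm{\mu}_x$ to $\ell^1$-continuity, which the paper takes for granted.
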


\begin{proof}
Let $x,y\in\mathbb{X}$ and $\bm{s}, \bm{t}\in[0,1]^d$. For any component $i \in \llbracket 1, d \rrbracket$, we have:
\begin{align*}
    |\varphi_i(x,\bm{s})-\varphi_i(y,\bm{t})| &\leq|\varphi_i(x,\bm{s})-\varphi_i(x,\bm{t})|+|\varphi_i(x,\bm{t})-\varphi_i(y,\bm{t})|\\
    &\leq|\varphi_i(x,\bm{s})-\varphi_i(x,\bm{t})|+ \sum_{\bm{k} \in \mathbb{N}^d}|\mu_x^{(i)}(\bm{k})-\mu_y^{(i)}(\bm{k})|\bm{t}^{\bm{k}}\\
    &\leq|\varphi_i(x,\bm{s})-\varphi_i(x,\bm{t})|+ \sum_{\bm{k} \in \mathbb{N}^d}|\mu_x^{(i)}(\bm{k})-\mu_y^{(i)}(\bm{k})| ~\text{since}~ \bm{t} \in [0,1]^d\\
    &=|\varphi_i(x,\bm{s})-\varphi_i(x,\bm{t})|+ \|\mu_x^{(i)}-\mu_y^{(i)}\|_{1}.
\end{align*}
The continuity of $\bm{\varphi}$ follows from the continuity of the maps $x\in\mathbb{X}\mapsto\mu_x^{(i)}$ and the continuity of the maps $\bm{s}\in[0,1]^d\mapsto\varphi_i(x,\bm{s})$ for all $x\in\mathbb{X}$. 

For $n\in\mathbb{N}$, the continuity of $\bm{\varphi}^{(n)}$ follows by induction, using Proposition~\ref{mulprop5}.
\end{proof}

\begin{prop}\label{contM}
Under (H\ref{hmult2}), for all $n\in\mathbb{N}^*$, $M^{(n)}\defeq M(T^{n-1}\cdot)\ldots M(\cdot)$ is continuous on $\mathbb{X}$.
\end{prop}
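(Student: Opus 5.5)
The plan is to reduce the statement to the continuity of the single map $x\mapsto M(x)$, and then use the fact that continuity is preserved under composition with $T$ and under finite matrix products. Since $\mathcal{M}_d(\mathbb{R})$ is finite dimensional and all norms are equivalent, it suffices to show that each entry $m_{ij}(x)=\frac{\partial \varphi_j}{\partial s_i}(x,\bm{1})$ is continuous on $\mathbb{X}$.

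\textbf{Step 1: continuity of each entry $m_{ij}$.} We have $m_{ij}(x)=\sum_{\bm{k}\in\mathbb{N}^d} k_i\,\mu_x^{(j)}(\bm{k})$. Continuity of $x\mapsto\bm{\mu}_x$ (in the $\ell^1$ sense used in the proof of Corollary~\ref{mulcor5}) does not by itself give continuity of first moments, because the tails may be unbounded. This is the main obstacle.

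\textbf{Step 1a: uniform control of the tails.} I would use part~\ref{Hc} of (H\ref{hmult2}). The bounds on the second derivatives at $\bm{1}$ give a uniform bound on the factorial second moments:
\[
\sup_{x\in\mathbb{X}} \mathbb{E}\big[Y_i(x,j)(Y_i(x,j)-1)\big]\le D .
\]
Together with $m_{ij}\le D$, this yields $\sup_x \mathbb{E}[Y_i(x,j)^2]\le 2D$. Hence the family $\{k_i\text{ under }\mu_x^{(j)}\}_{x\in\mathbb{X}}$ is uniformly integrable: for every $K$,
\[
\sum_{\bm{k}:\,k_i> K} k_i\,\mu_x^{(j)}(\bm{k})\le \frac{2D}{K},
\]
uniformly in $x$.

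\textbf{Step 1b: truncation.} Fix $\varepsilon>0$ and choose $K$ with $2D/K<\varepsilon$. Then
\[
|m_{ij}(x)-m_{ij}(y)|\le K\,\|\mu_x^{(j)}-\mu_y^{(j)}\|_1+\frac{4D}{K}.
\]
The first term tends to $0$ as $y\to x$ by (H\ref{hmult2})(a), so $m_{ij}$ is continuous. If continuity of $\bm{\mu}$ is only meant weakly (pointwise convergence of the masses), the same truncation still works, since the truncated sum $\sum_{k_i\le K}$ involves only finitely many values of $k_i$, each weighted by a marginal mass. Those marginal masses converge because the marginals are tight, which again follows from the uniform moment bound.

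\textbf{Step 2: conclusion by induction.} Each $x\mapsto M(T^{\ell}x)$ is continuous, being the composition of the continuous maps $M$ and $T^\ell$. Matrix multiplication $\mathcal{M}_d(\mathbb{R})^n\to\mathcal{M}_d(\mathbb{R})$ is polynomial in the entries and hence continuous. An induction on $n$ using $M^{(n+1)}=M(T^{n}\cdot)\,M^{(n)}$ then gives continuity of $M^{(n)}$ for all $n\in\mathbb{N}^*$.
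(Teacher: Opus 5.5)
Your proof is correct, and its outer structure matches the paper's. Both bound $\|M(x)-M(y)\|$ entrywise by a moment series in $\mu_x^{(j)}-\mu_y^{(j)}$, then pass to $M^{(n)}$ by composing with $T^\ell$ and inducting on the product.

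The difference is in the one nontrivial step: showing that $\sum_{\bm{k}\in\mathbb{N}^d} k_i\,|\mu_x^{(j)}(\bm{k})-\mu_y^{(j)}(\bm{k})|\to 0$ as $y\to x$. The paper invokes only continuity of $x\mapsto\bm{\mu}_x$ and the first-moment bounds $C\le m_{ij}\le D$. These do not suffice when continuity is understood in the $\ell^1$ sense used in Corollary~\ref{mulcor5}. A family with uniformly bounded means can converge in $\ell^1$ while a fixed amount of mean escapes to infinity (mass $\eta$ at a point of order $1/\eta$; such a family violates the second-derivative bound).

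You instead use the second-derivative part of (H\ref{hmult2})\ref{Hc} to get $\sup_x\mathbb{E}[Y_i(x,j)^2]\le 2D$. That gives uniform integrability and the truncation bound $K\|\mu_x^{(j)}-\mu_y^{(j)}\|_1+4D/K$. This is exactly the ingredient the paper's sketch needs, so your argument is the complete one.

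You also weight every $\bm{k}\in\mathbb{N}^d$ by $k_i$. The paper's display sums only over the axis points $k\bm{e}_i$, which is correct only if $\mu_x^{(j)}(k\bm{e}_i)$ is read as the mass of the $i$-th marginal at $k$.

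Your side remark about weak continuity is harmless but unnecessary. On $\mathbb{N}^d$, pointwise convergence of the masses of probability measures already gives $\ell^1$ convergence by Scheffé's lemma.
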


\begin{proof}
Let $x,y\in\mathbb{X}$. 
\begin{align*}
    \|M(x)-M(y)\|&\leq \max_{i\in\llbracket 1,d\rrbracket} \sum_{j=1}^d |\frac{\partial \varphi_j}{\partial s_i}(x, \bm{1})-\frac{\partial \varphi_j}{\partial s_i}(y, \bm{1})|\\
    &\leq \max_{i\in\llbracket 1,d\rrbracket} \sum_{j=1}^d \sum_{k=0}^{+\infty}k|\mu^{(j)}_x(k\bm{e}_i)-\mu^{(j)}_y(k\bm{e}_i)|.
\end{align*}
The continuity of $M$ follows by the continuity of $x\mapsto \mu_x$ and because $C \leq \min_{i,j} m_{ij} \leq \max_{i,j} m_{ij} \leq D$. By induction on $n\in\mathbb{N}^*$, $M^{(n)}$ is continuous for all $n\in\mathbb{N}^*$.
\end{proof}

The followings result concerns the probability of extinction.

\begin{proof}[Proof of Proposition~\ref{mulpropq}]
Let $x \in \mathbb{X}$. For each type $i \in \llbracket 1, d \rrbracket$, we denote by $\mathbb{P}_i$ the probability measure associated with the process starting from a single individual of type $i$, i.e., $\bm{Z}_0(x) = \bm{e}_i$. 
The sequence of events $(\{\bm{Z}_n(x) = \bm{0}\})_{n \in \mathbb{N}}$ is non-decreasing for the inclusion since $\bm{Z}_n(x) = \bm{0}$ implies $\bm{Z}_{n+1}(x) = \bm{0}$. Thus, by definition of $\bm{q}$, 
\begin{align*}
    q_i(x) = \mathbb{P}_i \left( \bigcup_{n \geq 0} \{ \bm{Z}_n(x) = \bm{0} \} \right) = \lim_{n \to \infty} \nearrow \mathbb{P}_i \left( \bm{Z}_n(x) = \bm{0} \right).
\end{align*}
Using the property of the probability generating function, we have\begin{align*}
    \mathbb{P}_i (\bm{Z}_n(x) = \bm{0}) = \varphi^{(n)}_i(x, \bm{0}).
\end{align*} Thus, we obtain:
\begin{align*}
    \bm{q}(x) &= \lim_{n \to \infty} \nearrow \left( \mathbb{P}_i (\bm{Z}_n(x) = \bm{0}) \right)_{i \in \llbracket 1,d \rrbracket} = \lim_{n \to \infty} \nearrow \bm{\varphi}^{(n)}(x, \bm{0}).\qedhere
\end{align*}
\end{proof}

\begin{proof}[Proof of Proposition~\ref{mulprop1}]
Let $x \in \mathbb{X}$. According to Proposition~\ref{mulpropq}:
\begin{align*}
    \bm{q}(x) = \lim_{n \to \infty} \bm{\varphi}^{(n+1)}(x, \bm{0}).
\end{align*}
By applying the recurrence relation established in Proposition~\ref{mulprop5}, we have for all $n \in \mathbb{N}$:
\begin{align*}
    \bm{\varphi}^{(n+1)}(x, \bm{0}) = \bm{\varphi}(x, \bm{\varphi}^{(n)}(Tx, \bm{0})).
\end{align*}
The probability generating function $\bm{\varphi}(x, \cdot)$ is continuous on $[0,1]^d$ for all $x \in \mathbb{X}$ (Corollary~\ref{mulcor5}). Therefore, by taking the limit as $n \to \infty$, we obtain:
\begin{align*}
    \bm{q}(x) &= \lim_{n \to \infty} \bm{\varphi}(x, \bm{\varphi}^{(n)}(Tx, \bm{0})) \\
    &= \bm{\varphi}(x, \lim_{n \to \infty} \bm{\varphi}^{(n)}(Tx, \bm{0})) \\
    &= \bm{\varphi}(x, \bm{q}(Tx)).\qedhere
\end{align*}
\end{proof}

Finally, the last result concerns the set of bad environments.

\begin{prop}\label{mulcor2}
Assume Hypothesis~(H\ref{hmult}). Let $N = \{x \in \mathbb{X} : \bm{q}(x) = \bm{1}\}$. Then, $N = T^{-1}N$. In particular, if $\nu \in \mathcal{E}_T(\mathbb{X})$, then $\nu(N) \in \{0,1\}$.
\end{prop}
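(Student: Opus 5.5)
The plan is to derive $N=T^{-1}N$ from the fixed-point relation of Proposition~\ref{mulprop1}, $\bm{q}(x)=\bm{\varphi}(x,\bm{q}(Tx))$, and then to conclude with ergodicity.

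\emph{Inclusion $T^{-1}N\subset N$.} Let $Tx\in N$, so $\bm{q}(Tx)=\bm{1}$. Then $\bm{q}(x)=\bm{\varphi}(x,\bm{1})=\bm{1}$, because each $\mu_x^{(i)}$ is a probability measure and hence $\varphi_i(x,\bm{1})=1$. Therefore $x\in N$.

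\emph{Inclusion $N\subset T^{-1}N$.} Let $x\in N$ and suppose, for contradiction, that $\bm{q}(Tx)\neq\bm{1}$. Choose $j$ with $q_j(Tx)<1$. The key step is to find a type $i$ such that $\varphi_i(x,\bm{s})<1$ whenever $s_j<1$. For any $i$,
\[
\varphi_i(x,\bm{s})=\sum_{\bm{k}}\mu_x^{(i)}(\bm{k})\,\bm{s}^{\bm{k}},
\]
and every term with $k_j\ge1$ satisfies $\bm{s}^{\bm{k}}\le s_j<1$. So $\varphi_i(x,\bm{q}(Tx))<1$ as soon as $\mu_x^{(i)}(\{\bm{k}:k_j\ge1\})>0$. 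Under (H\ref{hmult2}) this positivity is immediate: $m_{ij}(x)\ge C>0$, so some type produces type-$j$ offspring with positive probability. (Whichever index convention the paper uses for $M$, both orientations are positive, so the convention does not matter.) This gives $q_i(x)<1$, contradicting $x\in N$.

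Under (H\ref{hmult}) alone, we do not have this positivity, and this is the main obstacle. The fallback is to argue probabilistically. Since $\{\bm{Z}_n(x)=\bm{0}\}$ holds a.s.\ for every starting type, the process started from any type in environment $x$ dies out a.s. Write $\bm{q}(x)=\bm{\varphi}(x,\bm{q}(Tx))$ coordinatewise. If $q_j(Tx)<1$, then $q_i(x)<1$ for any type $i$ that produces type $j$ with positive probability at $x$. Thus any $j$ that is not reachable at $x$ must be handled separately. If the reading of the statement only requires "$\bm{q}(Tx)=\bm{1}$ on the relevant types", the argument goes through as above. Otherwise I would invoke (H\ref{hmult2})\ref{Hc}, which is the setting in which the proposition is used later, to guarantee reachability.

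\emph{Ergodic conclusion.} Measurability of $N$ follows from Proposition~\ref{mulpropq}: $\bm{q}$ is a monotone limit of the continuous functions $\bm{\varphi}^{(n)}(\cdot,\bm{0})$ given by Corollary~\ref{mulcor5}, so $N=\bm{q}^{-1}(\{\bm{1}\})$ is Borel. Since $N=T^{-1}N$, $N$ is a $T$-invariant set. For $\nu\in\mathcal{E}_T(\mathbb{X})$, the definition of ergodicity then gives $\nu(N)\in\{0,1\}$.
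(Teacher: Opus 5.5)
Your inclusion $T^{-1}N\subseteq N$ is correct, and so is your proof of $N\subseteq T^{-1}N$ under (H\ref{hmult2}). There, positivity of all entries of $M(x)$ makes every type reachable in one generation, so $\bm{\varphi}(x,\bm{s})=\bm{1}$ forces $\bm{s}=\bm{1}$. The genuine gap is your paragraph ``under (H\ref{hmult}) alone'', which proves nothing. It cannot be closed, because under (H\ref{hmult}) the inclusion $N\subseteq T^{-1}N$ is false.

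Here is a counterexample. Take $d=2$, $\mathbb{X}=\{a,b\}$ with the discrete topology, $Ta=Tb=b$, and fix $p\in(1/2,1)$. Set $\mu_a^{(1)}=\mu_a^{(2)}=\mu_b^{(1)}=p\,\delta_{\bm{0}}+(1-p)\,\delta_{2\bm{e}_1}$ and $\mu_b^{(2)}=\delta_{2\bm{e}_2}$. Hypothesis (H\ref{hmult}) holds, since the quantity in (H\ref{hmult})\ref{1b} equals $p$ or $0$. In environment $b$, type $1$ only begets type $1$ with mean $2(1-p)<1$, while type $2$ doubles forever, so $\bm{q}(b)=(1,0)\neq\bm{1}$. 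Yet $q_i(a)=p+(1-p)\,q_1(b)^2=1$ for $i=1,2$. Hence $N=\{a\}$ while $T^{-1}N=\emptyset$. This is exactly your unreachable-type scenario.

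The paper's proof has the same hole. Its chain of equalities replaces $\{\bm{\varphi}(x,\bm{q}(Tx))=\bm{1}\}$ by $\{\bm{q}(Tx)=\bm{1}\}$, citing only $\mu_x^{(i)}(\bm{0})<1$. That is the single-type argument, and for $d\geq 2$ it does not give $\bm{\varphi}(x,\bm{s})=\bm{1}\Rightarrow\bm{s}=\bm{1}$. Your reachability condition (for each $j$, some $i$ with $\mu_x^{(i)}(\{\bm{k}:k_j\geq 1\})>0$) is precisely the missing ingredient, and (H\ref{hmult2})\ref{Hc} supplies it.

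What you missed is that the ``in particular'' clause survives under (H\ref{hmult}), so it need not go through the full equality $N=T^{-1}N$. Since $T^{-1}N\subseteq N$, the sets $T^{-n}N$ decrease. Hence $N_\infty\defeq\bigcap_{n\geq 0}T^{-n}N$ satisfies $T^{-1}N_\infty=N_\infty$ exactly. By $T$-invariance of $\nu$, $\nu(N_\infty)=\lim_n\nu(T^{-n}N)=\nu(N)$, and ergodicity gives $\nu(N)\in\{0,1\}$.

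The correct picture is therefore:
\begin{itemize}
\item $T^{-1}N\subseteq N$ always;
\item $N=T^{-1}N$ under (H\ref{hmult2}), by your argument;
\item $\nu(N)\in\{0,1\}$ for every $\nu\in\mathcal{E}_T(\mathbb{X})$ already under (H\ref{hmult}), which is all the trichotomy of Definition~\ref{defclass} needs.
\end{itemize}
Your measurability remark for $N$ is fine.
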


\begin{proof}
Let $\nu \in \mathcal{E}_T(\mathbb{X})$. By definition of the set $N$:
\begin{align*} 
N &= \{x \in \mathbb{X} : \bm{q}(x) = \bm{1}\} \\
  &= \{x \in \mathbb{X} : \bm{\varphi}(x, \bm{q}(Tx)) = \bm{1}\} \text{ by Proposition~\ref{mulprop1}},\\
  &= \{x \in \mathbb{X} : \bm{\varphi}(x, \bm{q}(Tx)) = \bm{1}\}\text{ because } \mu^{(i)}_{x}(\bm{0}) < 1 \text{ for all } x\in\mathbb{X}\text{ and }i\in\llbracket 1,d\rrbracket,\\
  &= T^{-1}N.\qedhere
\end{align*}
\end{proof}

\subsection{Continuity of the probability of extinction in the supercritical case}

\paragraph{The mean matrix set}~\newline

\begin{definition}
Let $C>0$. We define: \begin{align*}
    \mathcal{M}_C\defeq\left\{A\in\mathcal{M}_d(\mathbb{R}): \forall i,j\in\llbracket 1,d\rrbracket, a_{i,j}>0 \text{ and } \frac{\max_{i,j}a_{i,j}}{\min_{i,j}a_{i,j}}\leq C\right\}.
\end{align*}
\end{definition}

\begin{definition}
Let $(A_n)_{n\in\mathbb{N}}\in\mathcal{M}_d(\mathbb{R})$. We define:\begin{align*}
    A^{(n)}=A_{n-1}\ldots A_0.
\end{align*}
\end{definition}

\begin{lemma}\cite[Lemma~2]{MR0121828}\label{lemmaproduct}
Let $C\geq 1$ and $(A_n)_{n\in\mathbb{N}}\in\mathcal{M}_C^\mathbb{N}$. Then, for all $n\in\mathbb{N}^*$, the matrix product $A^{(n)}\in\mathcal{M}_{C^2}$.
\end{lemma}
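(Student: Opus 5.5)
We argue by induction on $n\in\mathbb{N}^*$, but the induction hypothesis we carry is a structural one rather than the conclusion itself. Applying the ratio bound $C$ naively at each multiplication would make the constant grow like $C^{n}$. We therefore aim to show that for all $n\geq 1$ every entry of $A^{(n)}=A_{n-1}\ldots A_0$ factorises, up to a factor in $[1/C,C]$, as a product of a ``row weight'' and a ``column weight''.

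We first prove the base case directly. Write $m_k\defeq\min_{i,j}(A_k)_{i,j}>0$. Since $A_k\in\mathcal{M}_C$, every entry satisfies
\begin{align*}
m_k\leq (A_k)_{i,j}\leq C m_k \quad\text{for all } i,j\in\llbracket 1,d\rrbracket.
\end{align*}
For $n=1$ the product is $A_0\in\mathcal{M}_C\subset\mathcal{M}_{C^2}$, since $C\geq 1$.

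For $n\geq 2$, we write $A^{(n)}=A_{n-1}\,B\,A_0$ with $B\defeq A_{n-2}\ldots A_1$; when $n=2$, $B$ is the identity. Because all entries are positive, $B$ is a nonnegative matrix with no zero row. The key computation is
\begin{align*}
(A^{(n)})_{i,j}=\sum_{k,l}(A_{n-1})_{i,k}\,B_{k,l}\,(A_0)_{l,j}.
\end{align*}
Bounding $(A_{n-1})_{i,k}$ between $m_{n-1}$ and $Cm_{n-1}$, and $(A_0)_{l,j}$ between $m_0$ and $Cm_0$, gives
\begin{align*}
m_{n-1}m_0\,S\leq (A^{(n)})_{i,j}\leq C^2 m_{n-1}m_0\,S,\qquad S\defeq\sum_{k,l}B_{k,l}>0.
\end{align*}
The quantity $S$ does not depend on $(i,j)$. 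Hence the ratio between the maximal and minimal entries of $A^{(n)}$ is at most $C^2$, all entries are positive, and so $A^{(n)}\in\mathcal{M}_{C^2}$. In this form no induction is really needed: the sandwich by the first and last factors suffices.

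The only delicate point is the temptation to induct with a constant that compounds. The fix is to isolate the outermost factors and use only positivity of the middle product $B$, which holds because products of positive matrices are positive. We should also check that $S>0$ and that $S$ is finite. Finiteness is clear. Positivity holds because $B$ is either the identity or a product of positive matrices.
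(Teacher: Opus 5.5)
Your argument is correct. Isolating the outer factors $A_{n-1}$ and $A_0$ and using only the nonnegativity of the middle product, together with $S>0$, gives the entrywise sandwich $m_{n-1}m_0S\le (A^{(n)})_{i,j}\le C^2m_{n-1}m_0S$; the paper does not prove this lemma but only cites Furstenberg--Kesten, and this sandwich is the standard proof of that lemma (the induction framing in your first paragraph is unnecessary, as you note yourself, and can be dropped).
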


\begin{lemma}\label{lemmain}
Let $C\geq 1$ and $(A_n)_{n\in\mathbb{N}}\in\mathcal{M}_C^\mathbb{N}$. Then, for all $n\in\mathbb{N}^*$, and $(i,j)\in\llbracket 1,d\rrbracket^2$, \begin{align*}
    \liminf_{n\to +\infty}\frac{1}{n}\log (\|A^{(n)}\|)=\liminf_{n\to +\infty}\frac{1}{n}\log(\rho(A^{(n)}))=\liminf_{n\to +\infty}\frac{1}{n}\log ((A^{(n)})_{i,j}).
\end{align*}
\end{lemma}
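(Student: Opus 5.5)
The plan is to prove that, for every $n\in\mathbb{N}^*$, the three quantities $\|A^{(n)}\|$, $\rho(A^{(n)})$ and $(A^{(n)})_{i,j}$ lie within multiplicative constants of one another. These constants will depend only on $C$ and $d$, not on $n$. Once such two-sided bounds hold, taking $\frac1n\log$ and then $\liminf$ kills the constants, since $\frac{1}{n}\log K\to 0$, and the three $\liminf$'s coincide.

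The key input is Lemma~\ref{lemmaproduct}. For every $n\geq 1$ it gives $A^{(n)}\in\mathcal{M}_{C^2}$, so all entries are positive and
\[
m_n \leq (A^{(n)})_{k,l}\leq C^2 m_n \quad\text{for all } k,l,
\]
where $m_n=\min_{k,l}(A^{(n)})_{k,l}$. From this I would derive three comparisons.

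\textbf{Norm versus entry.} Each row sum lies in $[d\,m_n, dC^2 m_n]$. Therefore
\[
\frac{d}{C^2}(A^{(n)})_{i,j}\leq d\,m_n\leq \|A^{(n)}\|\leq dC^2m_n\leq dC^2 (A^{(n)})_{i,j}.
\]

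\textbf{Spectral radius from above.} We always have $\rho(A^{(n)})\leq \|A^{(n)}\|$, because $\|\cdot\|$ is an operator norm (the max row sum norm induced by $\ell^\infty$).

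\textbf{Spectral radius from below.} This is the only step that needs an argument beyond bookkeeping. For a nonnegative matrix, $\rho$ is at least the minimum row sum. Concretely, if $A\bm{1}\geq r\bm{1}$ then $A^k\bm{1}\geq r^k\bm{1}$, hence $\|A^k\|\geq r^k$, and Gelfand's formula gives $\rho(A)\geq r$. Alternatively one can invoke the Perron–Frobenius eigenvector with the Collatz–Wielandt bound. Applied to $A^{(n)}$, this yields $\rho(A^{(n)})\geq d\,m_n\geq \|A^{(n)}\|/C^2$.

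Combining these, for all $n\geq1$,
\[
\frac{1}{C^2}\|A^{(n)}\|\leq\rho(A^{(n)})\leq\|A^{(n)}\|, \qquad \frac{1}{dC^2}\|A^{(n)}\|\leq (A^{(n)})_{i,j}\leq \frac{C^2}{d}\|A^{(n)}\|.
\]
Taking logarithms, dividing by $n$ and passing to the $\liminf$ gives the stated equalities.

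The argument actually gives more: the same conclusion holds with $\limsup$, or with limits whenever they exist. The only real obstacle is the lower bound on $\rho$ via the minimum row sum, and the Gelfand-formula argument settles it cleanly.
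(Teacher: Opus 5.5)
Your proposal is correct and follows essentially the paper's route: use Lemma~\ref{lemmaproduct} to place $A^{(n)}$ in $\mathcal{M}_{C^2}$, then bound $\|A^{(n)}\|$, $\rho(A^{(n)})$ and each entry above and below by multiples of $\min_{k,l}(A^{(n)})_{k,l}$, with constants depending only on $d$ and $C$. The paper obtains the lower bound on $\rho$ by citing Perron--Frobenius, whereas you prove it via the minimum row sum and Gelfand's formula; your bound $\rho(A^{(n)})\geq d\min_{k,l}(A^{(n)})_{k,l}$ is slightly sharper than the paper's $\rho(A^{(n)})\geq \min_{k,l}(A^{(n)})_{k,l}$, which makes no difference after taking $\frac{1}{n}\log$.
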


\begin{proof} Let $n\in\mathbb{N}^*$. By Lemma~\ref{lemmaproduct}, $A^{(n)}\in\mathcal{M}_{C^2}$. 
By Perron–Frobenius theorem,
\begin{align*}
    \min_{i,j}(A^{(n)})_{i,j}&\leq \rho(A^{(n)}) \leq \|A^{(n)}\|\leq d\max_{i,j}(A^{(n)})_{i,j}\leq dC^2\min_{i,j}(A^{(n)})_{i,j}.\qedhere
\end{align*}
\end{proof}

While the norm $\|\cdot\|$ is naturally sub-multiplicative, the following lemma provides a reverse inequality, allowing us to bound the product from below.

\begin{lemma}\label{lemmainf}
Let $C\geq 1$ and $A,B\in \mathcal{M}_C$. Then,\begin{align*}
    \|AB\|\geq \frac{1}{C}\|A\|\|B\|.
\end{align*}
\end{lemma}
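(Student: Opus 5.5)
The plan is to compute $\|AB\|$ directly in terms of the row sums of $B$, and to use the ratio condition defining $\mathcal{M}_C$ only once, to compare the smallest row sum of $B$ with the largest one. The cruder approach of bounding every entry of $AB$ from below by $d\min_{i,j}a_{i,j}\min_{i,j}b_{i,j}$ only gives a factor $1/C^2$, so we avoid it.

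\textbf{Step 1: rewrite the norm.} Let $r_k(B)\defeq\sum_{j=1}^d b_{k,j}$ be the $k$-th row sum of $B$. All entries of $A$ and $B$ are positive, so absolute values can be dropped, and exchanging the finite sums gives
\begin{align*}
\|AB\|=\max_{i}\sum_{j=1}^d\sum_{k=1}^d a_{i,k}b_{k,j}=\max_i\sum_{k=1}^d a_{i,k}\,r_k(B).
\end{align*}

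\textbf{Step 2: compare row sums of $B$.} Set $b\defeq\min_{i,j}b_{i,j}>0$. Since $B\in\mathcal{M}_C$, every entry of $B$ lies in $[b,Cb]$. Hence every row sum satisfies $db\le r_k(B)\le dCb$. In particular $\|B\|=\max_k r_k(B)\le dCb$, so for every $k$,
\begin{align*}
r_k(B)\ge db\ge \frac{1}{C}\|B\|.
\end{align*}

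\textbf{Step 3: conclude.} Inserting the bound of Step 2 into Step 1 and using $a_{i,k}>0$ gives
\begin{align*}
\|AB\|\ge \frac{\|B\|}{C}\max_i\sum_k a_{i,k}=\frac{1}{C}\|A\|\|B\|.
\end{align*}

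The proof is elementary. The only delicate point is using the ratio condition on $B$ alone, through the row sums, rather than on both $A$ and $B$; this is what keeps the constant at $1/C$ instead of $1/C^2$. Only positivity of $A$ is needed, not the condition $A\in\mathcal{M}_C$.
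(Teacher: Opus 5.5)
Your proof is correct and follows the paper's argument: both rewrite $\|AB\|$ as $\max_i\sum_k a_{i,k}\, r_k(B)$ and then bound every row sum of $B$ from below by $\frac{1}{C}\|B\|$. Your Step 2 makes explicit why the minimal row sum of $B$ is at least $\frac{1}{C}$ times the maximal one (the entries of $B$ lie in $[b,Cb]$), a comparison the paper states without justification.
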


\begin{proof}
\begin{align*}
   \|AB\|&=\max_{i}\sum_{j=1}^d \sum_{l=1}^d a_{i,l}b_{l,j}\\
   &= \max_{i} \sum_{l=1}^d a_{i,l}\sum_{j=1}^d b_{l,j}\\
   &\geq \max_{i} \sum_{l=1}^d a_{i,l} \min_{s} \sum_{j=1}^d b_{s,j}\\
   &\geq \max_{i} \sum_{l=1}^d a_{i,l} \frac{1}{C}\max_{s} \sum_{j=1}^d b_{s,j}\\
   &\geq \frac{1}{C}\|A\|\|B\|.\qedhere
\end{align*}
\end{proof}

\paragraph{Exponential growth of the population}~\newline

In this paragraph, we assume Hypothesis~(H\ref{hmult2}) and that $\pi_\nu>0$ for all $\nu\in\mathcal{E}_T(\mathbb{X})$. Let $C=\frac{C_{\max}}{C_{\min}}\geq 1$.

By Hypothesis~(H\ref{hmult2})\ref{Hc}, for all $x\in\mathbb{X}$, the mean matrix $M(x)\in\mathcal{M}_C$. By Lemma~\ref{lemmaproduct}, for all $n\in\mathbb{N}^*$,\begin{align*}
    M^{(n)}(x)=M(T^{n-1}x)\ldots M(x)\in\mathcal{M}_{C^2},
\end{align*}
and $x\mapsto M^{(n)}(x)$ is continuous by Proposition~\ref{contM}.

For all $n\in\mathbb{N}^*$, let the continuous function $\psi_n$ be defined by:\fonction{\psi_n}{\mathbb{X}}{\mathbb{R}}{x}{-\log (\|M^{(n)}(x)\|)+\log(C^2)}

\begin{lemma}
The sequence of functions $(\psi_n)_{n\in\mathbb{N}^*}$ is sub additive, that is, for all $x\in\mathbb{R}$ and $n,m\in\mathbb{N}^*$, $\psi_{n+m}(x)\leq \psi_n(x)+\psi_m(T^nx)$.
\end{lemma}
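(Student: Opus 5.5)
The plan is to reduce the claim to the reverse sub-multiplicativity of Lemma~\ref{lemmainf}, applied to the matrix products $M^{(n)}$. Those products lie in $\mathcal{M}_{C^2}$ by Lemma~\ref{lemmaproduct}. Throughout, $x\in\mathbb{X}$ and $n,m\in\mathbb{N}^*$ are fixed.

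\emph{Factorization.} From the definition of $M^{(n)}$ we have
\begin{align*}
M^{(n+m)}(x)=M(T^{n+m-1}x)\cdots M(T^nx)\,M(T^{n-1}x)\cdots M(x)=M^{(m)}(T^nx)\,M^{(n)}(x).
\end{align*}

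\emph{Lower bound on the norm.} Both factors belong to $\mathcal{M}_{C^2}$ by Lemma~\ref{lemmaproduct}, and $C^2\geq 1$. Lemma~\ref{lemmainf}, applied with constant $C^2$ and $A=M^{(m)}(T^nx)$, $B=M^{(n)}(x)$, gives
\begin{align*}
\|M^{(n+m)}(x)\|\geq \frac{1}{C^2}\,\|M^{(m)}(T^nx)\|\,\|M^{(n)}(x)\|.
\end{align*}

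\emph{Logarithms.} All entries are positive, so the norms are positive and we may take logarithms:
\begin{align*}
-\log\|M^{(n+m)}(x)\|\leq -\log\|M^{(n)}(x)\|-\log\|M^{(m)}(T^nx)\|+\log(C^2).
\end{align*}
Adding $\log(C^2)$ to both sides turns the left side into $\psi_{n+m}(x)$ and the right side into $\psi_n(x)+\psi_m(T^nx)$. This is exactly the claimed inequality.

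\emph{Main obstacle.} The only delicate point is bookkeeping. One must check that Lemma~\ref{lemmainf} is applied with constant $C^2$, not $C$, because the factors are products and not single mean matrices. With that choice, the $+\log(C^2)$ built into $\psi_n$ exactly absorbs the $1/C^2$ loss.
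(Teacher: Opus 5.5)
Your proposal is correct and is essentially the paper's proof. You factor $M^{(n+m)}(x)=M^{(m)}(T^nx)\,M^{(n)}(x)$, apply Lemma~\ref{lemmainf} with constant $C^2$ (justified by Lemma~\ref{lemmaproduct}), and take logarithms so that the $\log(C^2)$ built into $\psi_n$ absorbs the $1/C^2$ loss.
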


\begin{proof}
Let $x\in\mathbb{X}$ and $n,m\in\mathbb{N}^*$. As, $M^{(m)}(T^nx),M^{(n)}(x)\in\mathcal{M}_{C^2}$, by Lemma~\ref{lemmainf}, \begin{align*}
 \|M^{(n+m)}(x)\|\geq \frac{1}{C^2}\|M^{(m)}(T^nx)\|\|M^{(n)}(x)\|.
\end{align*}
Thus, \begin{align*}
    \psi_{n+m}(x)&=-\log (\|M^{(n+m)}(x)\|)+\log(C^2)\\
    &\leq +\log(C^2)-\log( \|M^{(m)}(T^nx)\|)-\log (\|M^{(n)}(x)\|)+\log(C^2)\\
    &\leq \psi_m(T^nx)+\psi_n(x).\qedhere
\end{align*}
\end{proof}

\begin{lemma}\label{lemmaborne}
There exists $\delta>0$ and $N\in\mathbb{N}$ such that for all $n\geq N$ and for all $x\in\mathbb{X}$, \begin{align*}
    \frac{1}{n}\log (\|M^{(n)}(x)\|)\geq \delta.
\end{align*}
\end{lemma}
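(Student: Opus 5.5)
The natural tool is the semi-uniform sub-additive ergodic theorem quoted in the introduction, \cite[Corollary~1.11]{MR1734626}, applied to the continuous sub-additive sequence $(\psi_n)_{n\in\mathbb{N}^*}$ just constructed. That result says the following. Suppose $(\psi_n)$ is a sub-additive sequence of continuous functions on a compact space with a continuous map $T$. Suppose also that for every $\nu\in\mathcal{E}_T(\mathbb{X})$ the limit $\lim_n \frac1n\int\psi_n\,d\nu$ is strictly negative. Then there exist $\varepsilon>0$ and $N\in\mathbb{N}$ such that $\frac1n\psi_n(x)\le -\varepsilon$ for all $n\ge N$ and all $x\in\mathbb{X}$. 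The plan is therefore to check the strict-inequality hypothesis, apply the corollary, and translate the conclusion back to $\|M^{(n)}\|$.

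First, I identify the ergodic averages of $\psi_n$. Fix $\nu\in\mathcal{E}_T(\mathbb{X})$. Since $\psi_n=-\log\|M^{(n)}\|+\log(C^2)$ and the constant term disappears after dividing by $n$, we get
\begin{align*}
\lim_{n\to\infty}\frac1n\int\psi_n\,d\nu=-\lim_{n\to\infty}\frac1n\mathbb{E}_\nu[\log\|M^{(n)}\|]=-\pi_\nu .
\end{align*}
The limit exists by Kingman's theorem, or by the reference \cite{MR0121828} cited in Theorem~\ref{extmult}. It is finite because (H\ref{hmult2})\ref{Hc} gives $dC_{\min}^n\le \|M^{(n)}(x)\|\le (dD)^n$ uniformly in $x$, so each $\psi_n$ is bounded. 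The standing assumption $\pi_\nu>0$ for every ergodic $\nu$ therefore gives $\lim_n\frac1n\int\psi_n\,d\nu<0$ for all $\nu\in\mathcal{E}_T(\mathbb{X})$, which is exactly the strict-inequality hypothesis of the corollary.

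Next, I apply \cite[Corollary~1.11]{MR1734626} to obtain $\varepsilon>0$ and $N$ with $\psi_n(x)\le -n\varepsilon$ for all $n\ge N$ and all $x\in\mathbb{X}$. Unwinding the definition of $\psi_n$ gives
\begin{align*}
\frac1n\log\|M^{(n)}(x)\|\ge \varepsilon+\frac{\log(C^2)}{n}\ge\varepsilon,
\end{align*}
using $C\ge1$. So $\delta=\varepsilon$ works.

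The main obstacle is matching the exact form of the cited corollary. If it is stated only for a single $n$, i.e.\ "there exists $n$ with $\psi_n<0$ everywhere", or with a non-strict sup over invariant rather than ergodic measures, I would bridge the gap as follows. The map $\nu\mapsto\lim_n\frac1n\int\psi_n\,d\nu=\inf_n\frac1n\int\psi_n\,d\nu$ is upper semicontinuous and affine on the compact convex set of invariant measures. Its maximum is therefore attained at an extreme point, i.e.\ an ergodic measure, and so the maximum is strictly negative. From a single $n_0$ with $\psi_{n_0}\le -\eta<0$ everywhere, sub-additivity then extends the bound to all large $n$: write $n=kn_0+r$ and iterate to get
\begin{align*}
\psi_n(x)\le -k\eta+\max_{0\le r<n_0}\sup_{\mathbb{X}}\psi_r .
\end{align*}
This yields a uniform linear decay, with $\delta$ slightly smaller than $\eta/n_0$.
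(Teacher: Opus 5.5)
Your proposal is correct and is essentially the paper's proof. You apply \cite[Corollary~1.11]{MR1734626} to the continuous sub-additive sequence $(\psi_n)_{n\in\mathbb{N}^*}$, using $\frac{1}{n}\psi_n\to-\pi_\nu<0$ for every $\nu\in\mathcal{E}_T(\mathbb{X})$, and then unwind $\psi_n$ using $\log(C^2)\geq 0$. The differences are minor. The paper states the hypothesis through the $\nu$-almost everywhere pointwise limit rather than the integrated one, which is equivalent by Kingman's theorem. Your fallback (Bauer's maximum principle on invariant measures, then sub-additive iteration from a single $n_0$) is sound but not needed in the paper.
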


\begin{proof}
Let $\nu\in\mathcal{E}_T(\mathbb{X})$. For $\nu$ almost all $x\in\mathbb{X}$,\begin{align*}
    \lim_{n\to\infty}\frac{1}{n}\psi_n(x)=\lim_{n\to\infty}\frac{1}{n}\left(-\log (\|M^{(n)}(x)\|)+\log(C^2)\right)=-\pi_\nu<0.
\end{align*}
By \cite[Corollary~1.11]{MR1734626}, there exists $\delta>0$ and $N\in\mathbb{N}$ such that for all $n\geq N$ and $x\in\mathbb{X}$,\begin{align*}
    \frac{1}{n}\psi_n(x)=\frac{1}{n}\left(-\log (\|M^{(n)}(x)\|)+\log(C^2)\right)\leq -\delta.
\end{align*}
That is, for all $n\geq N$ and $x\in\mathbb{X}$,\begin{align*}
    \frac{1}{n}\log (\|M^{(n)}(x)\|)&\geq \delta+\frac{1}{n}\log(C^2)\geq \delta.\qedhere
\end{align*}
\end{proof}

\begin{corollary}\label{corobornemoy}
There exists $\delta>0$ and $N\in\mathbb{N}$ such that for all $n\geq N$, $x\in\mathbb{X}$, and $i,j\in\llbracket 1,d\rrbracket$, \begin{align*}
    (M^{(n)}(x))_{i,j}\geq e^{n\delta}.
\end{align*}
\end{corollary}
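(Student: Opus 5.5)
The idea is to combine Lemma~\ref{lemmaborne} with the comparison between entries and norm that comes from Lemma~\ref{lemmaproduct}.

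First, by Lemma~\ref{lemmaborne} there are $\delta_0>0$ and $N_0\in\mathbb{N}$ such that $\|M^{(n)}(x)\|\geq e^{n\delta_0}$ for all $n\geq N_0$ and all $x\in\mathbb{X}$. Next, for every $n\in\mathbb{N}^*$ and $x\in\mathbb{X}$, the matrix $M^{(n)}(x)$ lies in $\mathcal{M}_{C^2}$, so exactly as in the proof of Lemma~\ref{lemmain} we have
\begin{align*}
\|M^{(n)}(x)\|\leq d\max_{i,j}(M^{(n)}(x))_{i,j}\leq dC^2\min_{i,j}(M^{(n)}(x))_{i,j}.
\end{align*}
Combining the two gives, for all $i,j$,
\begin{align*}
(M^{(n)}(x))_{i,j}\geq \frac{1}{dC^2}e^{n\delta_0}\qquad\text{for all } n\geq N_0 \text{ and } x\in\mathbb{X}.
\end{align*}

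It remains to absorb the constant $\frac{1}{dC^2}$. Set $\delta=\delta_0/2$ and choose $N\geq N_0$ large enough that $e^{N\delta_0/2}\geq dC^2$. Then for every $n\geq N$,
\begin{align*}
\frac{1}{dC^2}e^{n\delta_0}=\frac{e^{n\delta_0/2}}{dC^2}\,e^{n\delta}\geq e^{n\delta}.
\end{align*}
This holds uniformly in $x$ and in $(i,j)$, which is exactly the statement.

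There is no real obstacle here. The uniformity in $x$ is already supplied by Lemma~\ref{lemmaborne}, and the ratio bound from Lemma~\ref{lemmaproduct} does not depend on $n$ or $x$. The only care needed is to halve $\delta$ so that the multiplicative constant can be absorbed.
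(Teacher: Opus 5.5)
Your proof is correct and follows the paper's route. The paper simply calls the corollary a direct consequence of Lemmas~\ref{lemmain} and~\ref{lemmaborne}, and you supply the two details it leaves implicit: the bound $\|M^{(n)}(x)\|\leq dC^2\min_{i,j}(M^{(n)}(x))_{i,j}$, which is uniform in $n$ and $x$ and comes from the proof of Lemma~\ref{lemmain} (its liminf statement alone would not give uniformity), and the absorption of the factor $1/(dC^2)$ by halving $\delta$.
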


\begin{proof}
It is a direct corollary of Lemmas~\ref{lemmain} and~\ref{lemmaborne}.
\end{proof}

\paragraph{Upper semicontinuity}~\newline

\begin{lemma}\label{lemmadecr}
Assume Hypothesis~(H\ref{hmult2}) and that $\pi_\nu>0$ for all $\nu\in\mathcal{E}_T(\mathbb{X})$. Then, there exists $\bm{K}<\bm{1}$ and $N\in\mathbb{N}$ such that for all $x\in\mathbb{X}$, $\bm{\varphi}^{(N)}(x,\bm{K})\leq \bm{K}$.
\end{lemma}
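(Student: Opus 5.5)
Take $\bm{K}=(1-\varepsilon)\bm{1}$ with $\varepsilon>0$ small and $N$ given by Corollary~\ref{corobornemoy}, large enough that $(M^{(N)}(x))_{i,j}\geq e^{N\delta}$ for all $x,i,j$ and $e^{N\delta}\geq 2$. The idea is a second-order Taylor expansion of $\bm{\varphi}^{(N)}(x,\cdot)$ at $\bm{1}$, in the direction $-\varepsilon\bm{1}$. The linear term should give a decrease of order $\varepsilon e^{N\delta}\geq 2\varepsilon$. The quadratic remainder is $O(\varepsilon^2)$ uniformly in $x$. Hence $\bm{\varphi}^{(N)}(x,\bm{K})\leq \bm{1}-2\varepsilon\bm{1}+O(\varepsilon^2)\leq\bm{K}$ for $\varepsilon$ small.

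\textbf{First-order term.} By Proposition~\ref{mulprop5} and the chain rule, the Jacobian of $\bm{s}\mapsto\bm{\varphi}^{(N)}(x,\bm{s})$ at $\bm{1}$ is the product of the Jacobians of $\bm{\varphi}(T^kx,\cdot)$ at $\bm{1}$, since $\bm{\varphi}(y,\bm{1})=\bm{1}$. This gives $M^{(N)}(x)$ or its transpose, depending on the index convention. Either way every entry is at least $e^{N\delta}$ by Corollary~\ref{corobornemoy}. Therefore, for each $i$,
\begin{align*}
\sum_j \frac{\partial\varphi^{(N)}_i}{\partial s_j}(x,\bm{1})\,\varepsilon\geq d\,e^{N\delta}\varepsilon\geq 2\varepsilon .
\end{align*}

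\textbf{Second-order term (main obstacle).} The remainder needs a bound uniform in $x$. Each $\varphi^{(N)}_i(x,\cdot)$ is a generating function, so it is convex on $[0,1]^d$ with nonnegative second partial derivatives, and these derivatives are increasing in $\bm{s}$. The Taylor remainder along the segment from $\bm{1}$ to $\bm{K}$ is therefore at most $\tfrac{\varepsilon^2}{2}\sum_{j,l}\partial^2_{j,l}\varphi^{(N)}_i(x,\bm{1})$. To bound these second derivatives at $\bm{1}$ uniformly, differentiate the composition in Proposition~\ref{mulprop5} twice. Each second derivative is a finite sum of products of first derivatives, entries of $M(T^kx)$ bounded by $D$, and second derivatives of $\bm{\varphi}(T^kx,\cdot)$ at $\bm{1}$, bounded by $D$ under (H\ref{hmult2})\ref{Hc}. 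Inducting on $N$ then gives a constant $D_N$ independent of $x$ bounding all these quantities.

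Combining the two terms, $\varphi^{(N)}_i(x,\bm{K})\leq 1-2\varepsilon+\tfrac{d^2D_N}{2}\varepsilon^2$. Choosing $\varepsilon\leq 2/(d^2D_N)$ makes the right-hand side at most $1-\varepsilon$, which is $\bm{\varphi}^{(N)}(x,\bm{K})\leq\bm{K}$. The order of choices matters: fix $N$ first, which fixes $D_N$, and only then choose $\varepsilon$.

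The only delicate point is checking that the second derivatives of the iterates at $\bm{1}$ are finite and uniformly bounded. The explicit chain rule for the compositions from Proposition~\ref{mulprop5}, together with $\bm{\varphi}(y,\bm{1})=\bm{1}$, handles this.
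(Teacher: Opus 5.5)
Your proof is correct and follows the same route as the paper. Both take a Taylor expansion of $\bm{\varphi}^{(N)}(x,\cdot)$ at $\bm{1}$ in the direction $-\varepsilon\bm{1}$, using Corollary~\ref{corobornemoy} for the linear term and (H\ref{hmult2})\ref{Hc} for a remainder uniform in $x$, and you usefully make that uniform bound explicit via the second-order chain rule and induction on $N$, where the paper only asserts it. One small correction: a multivariate generating function is not convex on $[0,1]^d$ in general (e.g.\ $s_1s_2$), but your remainder bound along the diagonal segment never uses convexity, only that the second partials are nonnegative and nondecreasing, which is true.
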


\begin{proof}
By Corollary~\ref{corobornemoy}, let $\delta>0$ and $N\in\mathbb{N}$ be such that for all $n\geq N$, $x\in\mathbb{X}$, and $i,j\in\llbracket 1,d\rrbracket$, \begin{align*}
    (M^{(n)}(x))_{i,j}\geq e^{n\delta}.
\end{align*}
By Taylor-Young formula, for all $\varepsilon>0$, we have:
\begin{align*}
    \bm{\varphi}^{(N)}(x, \bm{1} - \epsilon \bm{1}) = \bm{\varphi}^{(N)}(x, \bm{1}) - \varepsilon M^{(N)}(x) \bm{1} + h(\epsilon,x).
\end{align*}
with $\frac{h(\epsilon,x)}{\varepsilon}\underset{\varepsilon\to 0}{\longrightarrow} \bm{0}$ and this convergence is uniform in the variable $x$ by (H\ref{hmult2})\ref{Hc}.
Moreover, because $\bm{\varphi}^{(N)}(x,\bm{1})=\bm{1}$ and by Corollary~\ref{corobornemoy}, we obtain that for $\varepsilon>0$ small enough:\begin{align*}
    \bm{\varphi}^{(N)}(x, \bm{1} - \epsilon \bm{1}) - (\bm{1}-\varepsilon\bm{1})=\varepsilon(\bm{1}-M^{(N)}(x)\bm{1}) + h(\epsilon,x)\leq \bm{0}.
\end{align*}
The conclusion follows for $\bm{K}=\bm{1}-\varepsilon\bm{1}$ with $\varepsilon>0$ such that the preceding inequality holds.
\end{proof}

\begin{lemma}\label{lemmalast}
Assume Hypothesis~(H\ref{hmult2}) and that $\pi_\nu>0$ for all $\nu\in\mathcal{E}_T(\mathbb{X})$. There exist $\bm{K}<\bm{1}$ and $N\in\mathbb{N}$ such that for all $x\in\mathbb{X}$, the sequence $(\bm{\varphi}^{(nN)}(x,\bm{K}))_{n\in\mathbb{N}}$ is decreasing and converges to $q(x)$.
\end{lemma}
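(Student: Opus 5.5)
We take $\bm{K}<\bm{1}$ and $N$ from Lemma~\ref{lemmadecr}, so that $\bm{\varphi}^{(N)}(x,\bm{K})\leq\bm{K}$ for all $x\in\mathbb{X}$. The argument has two parts: first show the sequence $(\bm{\varphi}^{(nN)}(x,\bm{K}))_n$ is non-increasing, which gives a limit $\bm{r}(x)\geq\bm{q}(x)$; then prove $\bm{r}(x)=\bm{q}(x)$.

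\textbf{Monotonicity.} Each $\bm{\varphi}^{(m)}(x,\cdot)$ is coordinatewise non-decreasing on $[0,1]^d$, since it is a generating function with nonnegative coefficients. By Proposition~\ref{mulprop5},
\begin{align*}
\bm{\varphi}^{((n+1)N)}(x,\bm{K})=\bm{\varphi}^{(nN)}\big(x,\bm{\varphi}^{(N)}(T^{nN}x,\bm{K})\big)\leq \bm{\varphi}^{(nN)}(x,\bm{K}),
\end{align*}
where the inequality applies Lemma~\ref{lemmadecr} at the point $T^{nN}x$ together with monotonicity. Since $\bm{K}\geq\bm{0}$, monotonicity and Proposition~\ref{mulpropq} also give $\bm{\varphi}^{(nN)}(x,\bm{K})\geq\bm{\varphi}^{(nN)}(x,\bm{0})\to\bm{q}(x)$. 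Hence the sequence decreases to some limit $\bm{r}(x)\geq\bm{q}(x)$.

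\textbf{Identification of the limit.} We must show $\bm{r}(x)\leq\bm{q}(x)$, i.e.\ $\bm{\varphi}^{(nN)}(x,\bm{K})-\bm{\varphi}^{(nN)}(x,\bm{0})\to\bm{0}$. This is the main obstacle. Probabilistically,
\begin{align*}
\varphi_i^{(n)}(x,\bm{K})-\varphi_i^{(n)}(x,\bm{0})=\mathbb{E}_i\big[\bm{K}^{\bm{Z}_n(x)}\mathds{1}_{\{\bm{Z}_n(x)\neq\bm{0}\}}\big]\leq \mathbb{E}_i\big[k^{|\bm{Z}_n(x)|}\mathds{1}_{\{\bm{Z}_n(x)\neq \bm{0}\}}\big],
\end{align*}
with $k=\max_j K_j<1$. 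It therefore suffices to show that on the survival event $|\bm{Z}_n(x)|\to\infty$ almost surely; dominated convergence then gives the claim. This is exactly the content of \cite[Theorems 1 and 2]{MR331477} on limits of compositions of multi-type generating functions in varying environments. For fixed $x$ the environment sequence $(\bm{\mu}_{T^nx})_n$ is a varying environment, and we plan to check their hypotheses from (H\ref{hmult2}). Positivity of the means is bounded below by $C$, with uniform bounds on first and second derivatives. Non-singularity is (H\ref{hmult2})\ref{Hb}, which by compactness and continuity holds uniformly, so $\inf_x\big(1-\mu_x^{(i)}(\bm{0})-\sum_j\mu_x^{(i)}(\bm{e}_j)\big)>0$. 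These theorems give that for $\bm{s}\in[0,1)^d$ the limit of $\bm{\varphi}^{(n)}(x,\bm{s})$ exists and does not depend on $\bm{s}$, so it equals $\bm{q}(x)$.

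\textbf{Fallback.} If the citation requires a growth condition, we use Corollary~\ref{corobornemoy}, which gives $M^{(n)}(x)\geq e^{n\delta}$ entrywise and supplies the divergence of the means. Should a direct argument be needed instead, we would fall back on a standard ``$\{|\bm{Z}_n|\leq L \text{ i.o.}\}$ has probability zero off extinction'' argument. From any state with at most $L$ individuals, extinction within one generation has probability at least $\inf_{x,i}\mu_x^{(i)}(\bm{0})^L$. If that infimum vanishes, we instead use the uniform non-singularity to argue that the population cannot stay bounded.
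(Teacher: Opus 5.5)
Your proposal is correct and follows essentially the same route as the paper. You take the same $\bm{K},N$ from Lemma~\ref{lemmadecr} and get monotonicity from Proposition~\ref{mulprop5} together with the monotonicity of $\bm{\varphi}^{(nN)}(x,\cdot)$. You use the same sandwich with $\bm{\varphi}^{(nN)}(x,\bm{0})$, and you identify the limit through Kaplan's composition-limit results, with uniform non-singularity obtained from compactness and continuity.

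The one difference is in how the citation is split. The paper uses \cite[Theorems 1 and 2]{MR331477} only for existence of the limit, and \cite[Theorem 3]{MR331477} (where uniform non-singularity enters) to show the limit does not depend on the starting point. Your probabilistic reformulation and your fallback are therefore not needed. The fallback as sketched would also not, on its own, rule out $|\bm{Z}_n(x)|$ returning to a bounded set infinitely often.
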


\begin{proof}
Consider $\bm{K}<\bm{1}$ and $N\in\mathbb{N}$ as in Lemma~\ref{lemmadecr}, i.e., such that for all $x\in\mathbb{X}$, $\bm{\varphi}^{(N)}(x,\bm{K})\leq\bm{K}$. For all $n\in\mathbb{N}$,
\begin{align*}
    \bm{\varphi}^{((n+1)N)}(x,\bm{K})&=\bm{\varphi}^{(Nn)}(x,\bm{\varphi}^{(N)}(T^{nN}x,\bm{K})) \text{ by Proposition~\ref{mulprop5}}\\
    &\leq \bm{\varphi}^{(Nn)}(x,\bm{K}),
 \end{align*}
because $\bm{s}\in[0,1]^d\mapsto\bm{\varphi}^{(nN)}(x,\bm{s})$ is non-decreasing and $\bm{\varphi}(T^{Nn}x,\bm{K})\leq \bm{K}$.
Thus, the sequence $(\bm{\varphi}^{(nN)}(x,\bm{K}))_{n\in\mathbb{N}}$ is non-increasing. Moreover, for all $n\in\mathbb{N}$, $\bm{\varphi}^{(nN)}(x,0)\leq\bm{\varphi}^{(nN)}(x,\bm{K})$, so $\bm{q}(x)\leq \bm{K}$.

For $n\in\mathbb{N}$, $\bm{\varphi}^{(nN)}(x,\bm{0})\leq\bm{\varphi}^{(Nn)}(x,\bm{K})$, because $\bm{s}\in[0,1]^d\mapsto\bm{\varphi}^{(nN)}(x,\bm{s})$ is non-decreasing. Since $\bm{\varphi}^{(nN)}(x,\bm{0})$ converges to $\bm{q}(x)$, we only need to show that $\bm{\varphi}^{(nN)}(x,\bm{K})-\bm{\varphi}^{(nN)}(x,\bm{0})\underset{n\to +\infty}{\longrightarrow}\bm{0}$.

Kaplan~\cite{MR331477} study the composition limits of probability generating functions. By \cite[Theorems 1 and 2]{MR331477}, under (H\ref{hmult2})\ref{Hb} and \ref{Hc}, for all $x\in\mathbb{X}$, $\bm{\varphi}^{(Nn)}(x,\bm{K})$ converges. Moreover, for all $x\in\mathbb{X}$, \begin{align*}
    0 \leq \mu^{(i)}_{x}(\bm{0}) + \sum\limits_{j=1}^d \mu^{(i)}_{x}(\bm{e}_j) < 1 \text{ for all }x\in\mathbb{X},
\end{align*}and the upper bound is uniform in $x\in\mathbb{X}$ because $\mu$ is continuous. Thus, by \cite[Theorem 3]{MR331477}, the limit of $\bm{\varphi}^{(Nn)}(x,\bm{K})$ is equal to $\lim\limits_{n\to +\infty}\bm{\varphi}^{(nN)}(x,\bm{0})=\bm{q}(x)$.
\end{proof}

\begin{proof}[Proof of Theorem~\ref{thmcontinuite}]
The function $q$ is both a limit of a sequence of continuously non-decreasing functions $(x\mapsto\bm{\varphi}^{(n)}(x,\bm{0}))_{n\in\mathbb{N}}$ (by definition of $q$) and a limit of a sequence of continuously non-increasing functions $(x\mapsto\bm{\varphi}^{(nN)}(x,\bm{K}))_{n\in\mathbb{N}}$ (by Lemma~\ref{lemmalast}). Thus, each component of $\bm{q}$, is continuous because upper and lower semicontinuous, and so $\bm{q}$ is continuous.
\end{proof}

\begin{proof}[Proof of Corollary~\ref{multcor6}]
We define a function $\Phi$ as:\begin{align}\label{defphi}
\Phi : \left\{\begin{array}{lcl}   \mathcal{C}(\mathbb{X},[0,1]^d) &\to      &\mathcal{C}(\mathbb{X},[0,1]^d)\\
f    &\mapsto  &\left(x\in\mathbb{X}\mapsto\bm{\varphi}(x,f(Tx))\right)
\end{array} \right. .      
\end{align} 
 $\Phi$ is uniformly continuous. Indeed, let $\varepsilon>0$. The function $(x,s)\in\mathbb{X}\times[0,1]^d\mapsto\bm{\varphi}(x,s)$ is uniformly continuous by Corollary~\ref{mulcor5}. Hence, there exists $\eta>0$ such that for all $x\in\mathbb{X}$ and $s,t\in[0,1]^d$ with $\|s-t\|\leq \eta$, we have $\|\bm{\varphi}(x,s)-\bm{\varphi}(x,t)\|\leq\varepsilon$. Let $f,g\in\mathcal{C}(\mathbb{X},[0,1]^d)$ be such that $\lVert f-g \rVert_{\infty}\leq \eta$. Then, for all $x\in\mathbb{X}$, $
\|f(Tx)-g(Tx)\|\leq \eta$. 
Thus, for all $x\in\mathbb{X}$, $\|\bm{\varphi}(x,f(Tx))-\bm{\varphi}(x,g(Tx))\|\leq \varepsilon$, i.e.\ $\lVert \Phi(f)-\Phi(g) \rVert_{\infty}\leq \varepsilon$. Therefore, $\Phi$ is uniformly continuous.

Let $\bm{a}\in [0,1)^d$ and $\bm{K}<\bm{1}$ be defined as in Lemma~\ref{lemmadecr}. Let also $\widetilde{\bm{K}}=(\widetilde{K},\ldots,\widetilde{K})\in[0,1)^d$ be such that $\sup(\sup(\bm{a}),K)<\widetilde{K}$. The sequence of continuous and increasing functions $(\bm{\varphi}^{(n)}(\cdot,\bm{0}))_{n\in\mathbb{N}}$ (respectively the sequence of continuous and decreasing functions $(\bm{\varphi}^{(nN)}(\cdot,\widetilde{\bm{K}}))_{n\in\mathbb{N}}$) converges pointwise to the continuous (by Theorem~\ref{thmcontinuite}) function $q$. Moreover, since $\mathbb{X}$ is compact, by Dini's theorem (applied on each component of the vector), the sequences $(\bm{\varphi}^{(n)}(\cdot,\bm{0}))_{n\in\mathbb{N}}$ and $(\bm{\varphi}^{(nN)}(\cdot,\widetilde{\bm{K}}))_{n\in\mathbb{N}}$ converge uniformly to $\bm{q}$.
The uniform convergence of $(\bm{\varphi}^{(nN)}(\cdot,\widetilde{\bm{K}}))_{n\in\mathbb{N}}$ to $q$ can be expressed as:
\begin{align*}  
    \Phi^{nN}(\widetilde{\mathbb{K}})\underset{n\to +\infty}{\longrightarrow} q
\end{align*}
(where $\widetilde{\mathbb{K}}$ is the constant function equal to $\widetilde{\bm{K}}$).
For every $k\in\llbracket0,N-1\rrbracket$, by the continuity of $\Phi$ and by Proposition~\ref{mulprop1}:
\begin{align*}  
    \Phi^{nN+k}(\widetilde{\mathbb{K}})\underset{n\to +\infty}{\longrightarrow} \Phi^k(q)=q .
\end{align*}
Thus, $(\bm{\varphi}^{(n)}(\cdot,\widetilde{\bm{K}}))_{n\in\mathbb{N}}$ converges uniformly to $q$.
By squeezing, $(\bm{\varphi}^{(n)}(\cdot,\bm{a}))_{n\in\mathbb{N}}$ converges uniformly to $\bm{q}$.
\end{proof}

\section*{Acknowledgments}
The author thanks his thesis supervisor, Damien Thomine, for his advice and proofreading.

\bibliographystyle{alpha}
\bibliography{biblio.bib}

\end{document}